\documentclass[11pt]{amsart}

\usepackage{amsmath,amssymb,amsthm,mathtools}
\usepackage[margin=1in]{geometry}
\usepackage{microtype}
\usepackage{xcolor}
\definecolor{citationgreen}{HTML}{07eb57}
\usepackage[
  colorlinks=false,
  linkbordercolor=red,
  citebordercolor=citationgreen,
  urlbordercolor=cyan,
  pdfborder={0 0 1}
]{hyperref}

\numberwithin{equation}{section}

\newtheorem{thm}{Theorem}[section]
\newtheorem{prop}[thm]{Proposition}
\newtheorem{cor}[thm]{Corollary}
\newtheorem{lem}[thm]{Lemma}

\theoremstyle{definition}
\newtheorem{defn}[thm]{Definition}

\theoremstyle{remark}

\newcommand{\B}{\mathcal{B}}
\newcommand{\cE}{\mathcal{E}}
\newcommand{\ccap}{\operatorname{cap}}
\newcommand{\F}{\mathcal{F}}
\newcommand{\hh}{\mathcal{H}}
\newcommand{\Hinf}{\hh^\infty}
\newcommand{\N}{\mathbb{N}}
\newcommand{\R}{\mathbb{R}}
\newcommand{\supp}{\operatorname{supp}}

\title[Products with a Parabolic Factor]
{Bounded Harmonic Functions on Products with a Parabolic Factor}
\author{Ruotong Jia}

\begin{document}

\begin{abstract}
We prove that if $M$ is a connected complete parabolic Riemannian manifold and
$N$ is a connected complete stochastically complete Riemannian manifold, then
every bounded harmonic function on $M\times N$ is independent of the
$M$-variable. Equivalently, pullback by the second projection induces an
isometric isomorphism from the space of bounded harmonic functions on $N$
onto that on $M\times N$. In particular, the product of two parabolic
manifolds has the bounded Liouville property, thereby answering Problem~16 of
Grigor'yan's survey in the affirmative. The key analytic input is a
total-variation memory-loss property of the heat kernel on a parabolic
manifold. We establish this property by showing that the time-one heat kernel
defines an aperiodic Harris recurrent transition kernel and then applying the
row-merging theorem of Jamison and Orey.
\end{abstract}

\maketitle

\tableofcontents

\section{Introduction}

For a connected complete Riemannian manifold $M$ without boundary, set
$$
\Hinf(M)\coloneq
\{u\in C^\infty(M):\Delta_M u=0,\ \|u\|_{L^\infty}<\infty\}.
$$
Here $\Delta_M$ denotes the Laplace--Beltrami operator. All functions are
real-valued, and $\Hinf(M)$ is equipped with the $L^\infty$ norm. The manifold
$M$ has the \emph{bounded Liouville property} if $\Hinf(M)$ consists only of
constant functions. Every parabolic manifold has this property. Parabolicity
itself is not preserved under products: $\R$ and $\R^2$ are parabolic, whereas
$\R^3=\R\times\R^2$ is not. Nevertheless, the classical Liouville theorem
shows that $\R^3$ still has the bounded Liouville property. Thus parabolicity
is sufficient, but not necessary, for the bounded Liouville property. It is
therefore natural to ask whether the latter property persists under products
of parabolic manifolds. Grigor'yan formulated precisely this question
in~\cite[Problem~16]{Grigorian}.

Let $\pi_N:M\times N\to N$ denote the projection $\pi_N(x,y)=y$. The
pullback of a function $v$ is given by $(\pi_N^*v)(x,y)=v(y)$.

We prove the following asymmetric statement, which is stronger than the
assertion for two parabolic factors.

\begin{thm}\label{thm:main}
Let $M$ and $N$ be connected complete Riemannian manifolds without boundary.
Assume that $M$ is parabolic and that $N$ is stochastically complete. Then
every bounded harmonic function on $M\times N$ is the pullback of a unique
bounded harmonic function on $N$. Consequently, pullback by the second
projection defines an isometric linear isomorphism
$$
\pi_N^*:\Hinf(N)\longrightarrow \Hinf(M\times N).
$$
\end{thm}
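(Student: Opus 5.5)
The approach is to use the heat semigroup on the product, combined with the memory-loss (total-variation merging) property of the heat kernel on the parabolic factor $M$ that the abstract announces.

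Let $u\in\Hinf(M\times N)$. First, because $M$ is parabolic it is stochastically complete, and $N$ is stochastically complete by hypothesis. Hence $M\times N$ is stochastically complete, and its heat kernel factorizes:
$$p^{M\times N}_t((x,y),(x',y'))=p^M_t(x,x')\,p^N_t(y,y').$$
Bounded harmonic functions on a stochastically complete manifold are invariant under the minimal heat semigroup. To see this, note that $v(t,\cdot)=P_tu-u$ is a bounded solution of the heat equation with zero initial data, and uniqueness of bounded solutions (equivalent to stochastic completeness) gives $v\equiv0$. So for every $t>0$,
$$u(x,y)=\int_M\int_N p^M_t(x,x')\,p^N_t(y,y')\,u(x',y')\,dy'\,dx'.$$

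Next, fix $x_1,x_2\in M$ and $y\in N$. Subtracting the two representations and using Fubini together with $\int_N p^N_t(y,y')\,dy'=1$ gives
$$|u(x_1,y)-u(x_2,y)|\le \|u\|_\infty\int_M|p^M_t(x_1,x')-p^M_t(x_2,x')|\,dx'.$$
The key analytic input is the memory-loss statement
$$\lim_{t\to\infty}\|p^M_t(x_1,\cdot)-p^M_t(x_2,\cdot)\|_{L^1(M)}=0\quad\text{for all }x_1,x_2\in M.$$
Letting $t\to\infty$ then shows that $u$ does not depend on $x$. Writing $u(x,y)=v(y)$, we get $\Delta_{M\times N}u=\Delta_N v=0$, so $v\in\Hinf(N)$ and $u=\pi_N^*v$.

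Uniqueness of $v$ is immediate, since $v(y)=u(x_0,y)$ for any fixed $x_0$. Conversely, $\pi_N^*v$ is harmonic and bounded for every $v\in\Hinf(N)$. Finally $\|\pi_N^*v\|_\infty=\|v\|_\infty$, so $\pi_N^*$ is a linear isometry, and the previous steps show it is onto.

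The main obstacle is the memory-loss statement. I would prove it via the route in the abstract: consider the Markov kernel $P(x,dx')=p^M_1(x,x')\,dx'$, which is stochastic because $M$ is stochastically complete.

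1. \emph{Harris recurrence.} I would show $P$ is Harris recurrent with respect to Riemannian measure. Parabolicity means Brownian motion is recurrent, so it visits every open ball at arbitrarily large times. To pass from continuous time to the integer-time skeleton, I would use that $p^M_1(x,\cdot)$ is positive and continuous, so $P$ is irreducible with Lebesgue-type minorization on small balls. A standard comparison, with $\int_0^\infty p_t=\infty$ transferring to $\sum_n p_n=\infty$ via parabolic Harnack on compact sets, gives that sets of positive measure are hit infinitely often.

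2. \emph{Aperiodicity.} This is immediate from strict positivity, since $p_1(x,x')>0$ everywhere.

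3. \emph{Merging.} The Jamison--Orey theorem then yields $\|P^n(x_1,\cdot)-P^n(x_2,\cdot)\|_{TV}\to0$ along integers. Monotonicity of the total-variation distance under further application of $P_s$ extends this to all real $t\to\infty$.

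The delicate point is rigorously establishing Harris recurrence of the discrete skeleton from recurrence of the continuous diffusion. I would handle it by a local Doeblin condition on a fixed compact ball combined with the strong Markov property.
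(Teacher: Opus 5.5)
Your reduction of the theorem to memory loss is the same as the paper's: stochastic completeness of $M\times N$, $P_tu=u$ by uniqueness of bounded heat solutions, the product kernel, the bound by $\|u\|_\infty\int_M|p_t^M(x,z)-p_t^M(x',z)|\,dz$, and the identification with $\Hinf(N)$. Your sketch of memory loss also follows the paper's outline: time-one kernel, aperiodicity from $p_1^M>0$, Jamison--Orey at integer times, and $L^1$-contraction for real times.

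The genuine difference is how you verify Harris recurrence. You work with the diffusion. Recurrence of Brownian motion gives well-separated visit times $\sigma_k$ to a compact ball $\bar U$. The strong Markov property and $\inf\{p_s^M(z,y): z\in\bar U,\ s\in[1,2],\ y\in D\}>0$ then give conditional probability at least $c\,\mu(A\cap D)$ of lying in $A$ at the integer time $\lceil\sigma_k\rceil+1$, and L\'evy's conditional Borel--Cantelli lemma finishes. This works, but it relies on the probabilistic characterization of parabolicity and on path-space machinery for Brownian motion.

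The paper never constructs Brownian motion. It feeds the capacity-zero cutoffs into $\langle f,(I-P_1^M)f\rangle\le\int_M|df|^2$ to get recurrence of the discrete form $\cE_K$. A discrete Picone identity then shows that bounded $K$-excessive functions are constant. Hence the reduced function $r_A$ equals $1$ a.e., so the hitting probability $H_A=Kr_A$ equals $1$ everywhere. This argument is purely analytic and yields an abstract merging theorem for any strictly positive symmetric Markov kernel with recurrent form.

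Two small cautions.

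\begin{itemize}
\item Your alternative of transferring $\int_0^\infty p_t^M=\infty$ to $\sum_n p_n^M=\infty$ via Harnack only gives $\sum_nP^n(x,A)=\infty$, i.e.\ recurrence in expectation. Upgrading this to almost-sure hitting from \emph{every} starting point needs an extra step, such as $P(x,\cdot)\ll\mu$ for all $x$. That is exactly what the paper uses in Proposition~\ref{prop:full-reduction}.
\item In the form of Jamison--Orey used here, merging is asserted only for points of the cyclic class $C_0$, whose complement is merely $\mu$-null. Extending it to all $x,x'$ requires one averaging step using $\nu_x(C_0)=1$, as in \eqref{eq:row-averaging}.
\end{itemize}
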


Since every parabolic manifold is stochastically complete and has the bounded
Liouville property, taking $N$ parabolic in Theorem~\ref{thm:main} gives the
affirmative conclusion requested in Grigor'yan's problem.

The distinction between parabolicity and the bounded Liouville property is
essential. Grigor'yan observed that, for every nontrivial compact
Riemannian manifold $K$, the product $N\times K$ has the bounded Liouville
property if and only if $N$ has that property and is stochastically
complete~\cite[Section~13.5]{Grigorian}. Pinchover constructed a complete but
stochastically incomplete manifold with the bounded Liouville
property~\cite{Pinchover_1995}.
Consequently, the assumption that both factors merely have the bounded
Liouville property cannot replace parabolicity in the product question.

Martin-boundary theory provides another perspective on the product problem.
Molchanov first studied this boundary for products of Markov chains and then
for products of Markov processes~\cite{Molchanov_1967,Molchanov_1970}. In the
Riemannian setting, Freire proved that if both factors are complete and
noncompact and have Ricci curvature bounded below, then every bounded harmonic
function on their product is separately harmonic in the two
variables~\cite[Corollary, p.~216]{Freire_1991}. Hence Grigor'yan's question
has an affirmative answer under these curvature hypotheses.
Theorem~\ref{thm:main} removes the curvature hypotheses from the
bounded-harmonic problem posed by Grigor'yan.

The key analytic input in our proof is that the heat kernel of a parabolic
manifold asymptotically forgets its initial point in total variation. Let
$p_t^M$ denote the minimal heat kernel of $M$.

\begin{thm}[Memory loss on parabolic manifolds]\label{thm:memory-loss}
Let $M$ be a connected complete parabolic Riemannian manifold of dimension
$m$. Then, for all $x,x'\in M$,
\begin{equation}\label{eq:memory-loss}
\lim_{t\to\infty}
\int_M |p_t^M(x,y)-p_t^M(x',y)|\,d\hh_M^m(y)=0.
\end{equation}
\end{thm}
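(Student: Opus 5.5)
The plan is the one indicated in the abstract. We view the time-one heat kernel as a Markov transition kernel $P(x,dy)=p_1^M(x,y)\,d\hh_M^m(y)$ on $M$, show that it is an aperiodic Harris recurrent chain, and then apply the Jamison--Orey row-merging theorem to obtain $\|P^n(x,\cdot)-P^n(x',\cdot)\|_{TV}\to 0$ along integer times. Finally we pass to continuous time.

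\textbf{Step 1 (Markov kernel).} A parabolic manifold is stochastically complete, so $\int_M p_1^M(x,y)\,dy=1$ and $P$ is a genuine Markov kernel. By the semigroup property, $P^n(x,dy)=p_n^M(x,y)\,dy$. The function $p_1^M$ is smooth and strictly positive on $M\times M$, since $M$ is connected.

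\textbf{Step 2 ($\varphi$-irreducibility and aperiodicity).} Take $\varphi=\hh_M^m$. Strict positivity of $p_1^M$ gives $P(x,A)>0$ for every $x$ and every set $A$ of positive measure. This yields irreducibility, and in fact aperiodicity as well: a cycle $D_1,\dots,D_d$ with $d\ge 2$ would require $P(x,D_2)=1$ for $x\in D_1$, which forces $P(x,D_1)=0$. That is impossible when $D_1$ has positive measure. For a small set, fix a compact ball $K$ with nonempty interior. Continuity and positivity give $\inf_{x,y\in K}p_1^M(x,y)=c>0$, so $P(x,\cdot)\ge c\,\hh_M^m|_K$ for $x\in K$.

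\textbf{Step 3 (Harris recurrence; main obstacle).} We need, for every $A$ with $\hh_M^m(A)>0$ and every starting point $x$, that the chain visits $A$ infinitely often almost surely. It suffices to show that the chain returns to $K$ infinitely often from every starting point: by the minorization on $K$, each visit to $K$ gives a fixed positive probability $c\,\hh_M^m(A\cap K)$ of hitting $A$ at the next step (take $K$ meeting $A$ in positive measure, or argue via $P^2$). The conditional Borel--Cantelli lemma then gives infinitely many visits to $A$.

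Recurrence of the skeleton to $K$ comes from parabolicity. Brownian motion $X_t$ on $M$ is recurrent, so it visits a smaller ball $B\Subset K^\circ$ at arbitrarily large times almost surely. We then need to convert "the continuous path visits $B$" into "$X_n\in K$ for some integer $n$". After a hitting time $\tau$ of $B$, the strong Markov property gives
\[
\mathbb P_{X_\tau}\bigl(X_s\in K \text{ for all } s\in[0,1]\bigr)\ge \delta>0,
\]
uniformly over starting points in $\bar B$. This follows from continuity of paths and compactness, using $\operatorname{dist}(B,\partial K)>0$.

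So each continuous visit to $B$ produces an integer-time visit to $K$ with probability at least $\delta$. Taking a sequence of stopping times $\tau_k$ of visits to $B$ separated by more than $1$, a conditional Borel--Cantelli (Lévy zero-one law) argument gives infinitely many integer visits to $K$ almost surely. A Green-function alternative is to show $\sum_n P^n(x,K)=\infty$ from $\int_0^\infty p_t^M(x,y)\,dt=\infty$, which characterizes parabolicity, together with a local parabolic Harnack inequality comparing $p_t$ with $p_n$ on $K$. However, $\sum_n P^n=\infty$ only gives recurrence, not Harris recurrence from every point, so I would use the path argument. This is the step I expect to need the most care.

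\textbf{Step 4 (Jamison--Orey and continuous time).} The Jamison--Orey theorem states that for an aperiodic Harris recurrent chain,
\[
\|P^n(x,\cdot)-P^n(x',\cdot)\|_{TV}\to 0 \quad\text{for all } x,x'.
\]
This holds even in the null-recurrent case, which is exactly what arises for noncompact parabolic $M$. It gives \eqref{eq:memory-loss} along integers.

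For general $t=n+s$ with $s\in[0,1)$, write $p_t(x,\cdot)=\int p_s(x,z)\,p_n(z,\cdot)\,dz$. This does not immediately compare $x$ with $x'$. Instead, use monotonicity: the map $t\mapsto\|p_t(x,\cdot)-p_t(x',\cdot)\|_{L^1}$ is nonincreasing, because $p_{t+s}(x,\cdot)-p_{t+s}(x',\cdot)=P_s^*\bigl(p_t(x,\cdot)-p_t(x',\cdot)\bigr)$ and $P_s$ is an $L^1$ contraction. Convergence along integers then gives the full limit.
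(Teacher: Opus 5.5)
Your proof is correct. It keeps the paper's overall architecture: the unit-time kernel, strict positivity for aperiodicity, Jamison--Orey at integer times, and $L^1$-contraction for real times. It obtains Harris recurrence by a genuinely different route.

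The paper stays purely analytic. The parabolic cutoffs $\eta_j$, together with $\langle f,(I-P_1^M)f\rangle\le\int_M|df|^2\,d\hh_M^m$, make the discrete Dirichlet form $\cE_K$ of $K=P_1^M$ recurrent. A discrete Picone identity then shows that bounded $K$-excessive functions are a.e.\ constant. Applied to the reduced function $r_A$, this gives $r_A=1$ a.e., hence $H_A=Kr_A\equiv1$ everywhere, which is exactly the hitting hypothesis of Jamison--Orey.

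You instead import recurrence of Brownian motion, one of the standard equivalent forms of parabolicity (after rescaling time from $\frac12\Delta_M$ to $\Delta_M$). You transfer it to integer times using the strong Markov property at hitting times of $B$ spaced more than one unit apart, plus L\'evy's conditional Borel--Cantelli lemma. You then pass from $K$ to an arbitrary $A$ through the minorization $p_1^M\ge c$ on $K\times K$.

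Your route is shorter, more intuitive, and gives the ``infinitely often'' form of Harris recurrence directly. The price is heavier machinery: Brownian motion as a strong Markov diffusion with continuous paths whose integer skeleton has law $P_x$, and its recurrence from every starting point. The paper needs only the capacity criterion and the minimal heat kernel. Its Section~3 also yields a reusable abstract statement for any symmetric, strictly positive, conservative kernel with recurrent form.

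Two minor remarks.

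\emph{The uniform bound.} The bound $\inf_{z\in\bar B}\mathbb P_z(X_s\in K,\ 0\le s\le 1)>0$ does not follow from path continuity and compactness alone. It does follow because this probability dominates $P_1^{K^\circ}1(z)$. Here $P_1^{K^\circ}$ is the Dirichlet heat semigroup of the connected open set $K^\circ$, and $P_1^{K^\circ}1$ is continuous and strictly positive on $K^\circ\supset\bar B$.

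\emph{The Jamison--Orey version.} The version you quote (merging for all $x,x'$ for aperiodic Harris chains) is stronger than the one the paper states. The paper's version gives merging only within cyclic classes off a null set $F$, so it needs an extra one-step averaging with $\nu_x\ll\mu$. Either version works here.
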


Its role in the product theorem is as follows. For $u\in\Hinf(M\times N)$,
heat-semigroup invariance, the product heat-kernel formula, and stochastic
completeness of $N$ yield
$$
|u(x,y)-u(x',y)|
\le \|u\|_{L^\infty(M\times N)}
\int_M |p_t^M(x,z)-p_t^M(x',z)|\,d\hh_M^m(z).
$$
By \eqref{eq:memory-loss}, the right-hand side tends to zero; hence $u$ is
independent of the $M$-variable.

Total-variation row merging is classical in the theory of Markov chains.
Orey proved
it for irreducible aperiodic recurrent chains on countable state
spaces~\cite{Orey_1962}, and Jamison and Orey established the corresponding
result for Harris recurrent chains on general measurable
spaces~\cite{Jamison_1967}. Thus Theorem~\ref{thm:memory-loss} is not a new
abstract row-merging theorem. What is needed for the product problem is the
verification that the unit-time heat-kernel operator
$$
Kf(x)=\int_M p_1^M(x,y)f(y)\,d\hh_M^m(y)
$$
falls within that framework. A parabolic cutoff sequence establishes
recurrence of the discrete Dirichlet form associated with $K$. A
reduced-function argument then verifies the Harris recurrence hypothesis,
while strict positivity of the heat kernel eliminates any nontrivial cyclic
decomposition. The Jamison--Orey theorem yields merging at integer times, and
the $L^1$-contraction property of the heat semigroup extends the limit to
arbitrary real times.

Section~\ref{sec:preliminaries} collects the geometric and analytic
preliminaries.
Section~\ref{sec:integral-operators} develops the recurrent-kernel argument and
states the Jamison--Orey theorem in the notation used here. The final section
applies the memory-loss estimate to the product heat kernel and proves
Theorems~\ref{thm:memory-loss} and~\ref{thm:main}.

\section{Preliminaries}\label{sec:preliminaries}

\subsection{Hausdorff measure and the heat semigroup}

Let $M$ be a connected complete Riemannian manifold of dimension $m$
without boundary. Its Riemannian distance induces the $m$-dimensional
Hausdorff measure $\hh_M^m$, normalized to agree with Lebesgue measure on
$\R^m$. All $L^p(M)$ spaces and heat-kernel integrals below are understood
with respect to $\hh_M^m$. Since $M$ is second countable, its Borel
$\sigma$-algebra
$\B(M)$ is countably generated. Moreover, a countable relatively compact
exhaustion shows that $\hh_M^m$ is $\sigma$-finite.

The minimal heat kernel $p_t^M(x,y)$ is the minimal nonnegative fundamental
solution of the heat equation. It exists on every Riemannian manifold. In the
present connected setting, it is smooth and strictly positive for $t>0$. It
is symmetric and satisfies the semigroup identity
\begin{equation}\label{eq:heat-kernel-identities}
\begin{aligned}
p_t^M(x,y)&=p_t^M(y,x),\\
p_{t+s}^M(x,y)
&=\int_M p_t^M(x,z)p_s^M(z,y)\,d\hh_M^m(z).
\end{aligned}
\end{equation}

The associated minimal heat semigroup is
$$
P_t^M f(x)\coloneq\int_M p_t^M(x,y)f(y)\,d\hh_M^m(y).
$$

It is positivity preserving and sub-Markovian. In particular,
$$
0<P_t^M 1(x)=\int_M p_t^M(x,y)\,d\hh_M^m(y)\le1.
$$

\begin{lem}\label{lem:heat-l1-contraction}
Let $M$ be an arbitrary Riemannian manifold without boundary. Then, for every
$t>0$ and every $f\in L^1(M)$,
$$
\|P_t^M f\|_{L^1(M)}\le\|f\|_{L^1(M)}.
$$
\end{lem}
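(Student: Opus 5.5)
The plan is to reduce the claim to the pointwise bound $|P_t^M f|\le P_t^M|f|$ and then use Tonelli's theorem together with symmetry of the kernel and the sub-Markov property. This uses only the facts recorded above: the heat kernel is nonnegative, $p_t^M(x,y)=p_t^M(y,x)$ from \eqref{eq:heat-kernel-identities}, and $P_t^M1\le 1$. We assume only that $f$ is measurable with $\int_M|f|\,d\hh_M^m<\infty$.

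First, since $p_t^M(x,\cdot)\ge0$, we have for every $x$
$$
|P_t^M f(x)|\le\int_M p_t^M(x,y)|f(y)|\,d\hh_M^m(y)=P_t^M|f|(x)\in[0,\infty].
$$
Second, integrate this bound in $x$. The integrand $(x,y)\mapsto p_t^M(x,y)|f(y)|$ is nonnegative and measurable on $M\times M$, because $p_t^M$ is continuous. The measure $\hh_M^m$ is $\sigma$-finite, so Tonelli's theorem permits exchanging the order of integration:
$$
\int_M P_t^M|f|(x)\,d\hh_M^m(x)
=\int_M |f(y)|\Bigl(\int_M p_t^M(x,y)\,d\hh_M^m(x)\Bigr)d\hh_M^m(y).
$$
Third, by symmetry the inner integral equals $\int_M p_t^M(y,x)\,d\hh_M^m(x)=P_t^M1(y)$, and this is at most $1$. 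Combining the three steps gives $\|P_t^Mf\|_{L^1}\le\|f\|_{L^1}$.

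As a byproduct, $P_t^M|f|$ is finite almost everywhere. Hence the integral defining $P_t^Mf(x)$ converges absolutely for a.e.\ $x$, and $P_t^Mf$ is a well-defined element of $L^1(M)$.

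None of these steps is a real obstacle. The only point needing care is the one just mentioned: a priori $P_t^Mf$ need not be defined everywhere for $f\in L^1$, and it is the Tonelli step that shows it is defined almost everywhere. The lemma says "arbitrary" manifold, i.e.\ without connectedness or completeness. In that generality we still rely on the minimal heat kernel being nonnegative, symmetric and sub-Markovian. Positivity and symmetry hold on each component, and the kernel vanishes across components, so the argument is unchanged.
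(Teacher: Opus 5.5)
Your proposal is correct and is essentially the paper's proof: the pointwise bound $|P_t^M f|\le P_t^M|f|$, followed by Tonelli's theorem, symmetry of the kernel, and $P_t^M1\le1$. Your remarks on almost-everywhere definedness of $P_t^Mf$ and on disconnected manifolds are harmless refinements that the paper leaves implicit.
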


\begin{proof}
Positivity gives $|P_t^M f|\le P_t^M|f|$. By symmetry, Tonelli's theorem,
and the sub-Markov property,
$$
\begin{aligned}
\|P_t^M f\|_{L^1(M)}
&\le\int_M P_t^M |f|(x)\,d\hh_M^m(x)\\
&=\int_M |f|(y)P_t^M 1(y)\,d\hh_M^m(y)\\
&\le\|f\|_{L^1(M)}.
\end{aligned}
$$
\end{proof}

\begin{defn}\label{def:stochastic-complete}
The manifold $M$ is \emph{stochastically complete} if
\begin{equation}\label{eq:stochastic-completeness}
P_t^M 1(x)=\int_M p_t^M(x,y)\,d\hh_M^m(y)=1,
\end{equation}
for every $t>0$ and every $x\in M$.
\end{defn}

Although the terminology is probabilistic,
Definition~\ref{def:stochastic-complete} is simply the analytic statement
that the minimal heat kernel preserves total mass.

\subsection{Parabolicity and cutoff functions}

\begin{defn}\label{def:parabolic}
The manifold $M$ is \emph{parabolic} if
$$
\int_0^\infty p_t^M(x,y)\,dt=\infty,
$$
for all distinct $x,y\in M$. For a connected manifold, it is enough to
require this divergence for a single pair of distinct points.
\end{defn}

A $C^2$ function $v$ is superharmonic if $\Delta_M v\le0$.
Parabolicity is equivalent to every positive superharmonic function being
constant; see~\cite[Theorem~5.1]{Grigorian}.

For a compact set $K\subset M$, its Newtonian capacity is
$$
\ccap_M(K)\coloneq
\inf\left\{
\int_M |d\varphi|^2\,d\hh_M^m:
\varphi\in C_c^\infty(M),
\varphi\ge1\text{ on a neighborhood of }K
\right\}.
$$
The capacity criterion~\cite[Theorem~5.1(6)]{Grigorian} asserts that $M$ is
parabolic if and only if $\ccap_M(K)=0$ for every compact set
$K\subset M$.

\begin{lem}[Parabolic cutoff sequence]\label{lem:parabolic-cutoff}
If $M$ is parabolic, then there exists a sequence
$\{\eta_j\}_{j=1}^\infty\subset C_c^\infty(M)$ such that
$$
0\le\eta_j\le1\qquad\text{on }M,
$$
for every $j$. For every compact set $K\subset M$,
$$
\lim_{j\to\infty}\sup_{x\in K}|\eta_j(x)-1|=0.
$$
Moreover,
\begin{equation}\label{eq:cutoff-energy-decay}
\lim_{j\to\infty}
\int_M |d\eta_j|^2\,d\hh_M^m=0.
\end{equation}
\end{lem}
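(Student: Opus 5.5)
We derive the cutoff sequence from the capacity criterion by exhausting $M$ with compact sets and applying the definition of capacity with tolerance $1/j^2$, then post-composing with a smooth clamp. Fix an exhaustion $K_1\subset K_2\subset\cdots$ of $M$ by compact sets with $K_j\subset\operatorname{int}K_{j+1}$ and $\bigcup_j K_j=M$; for instance, closed geodesic balls $\overline{B(o,j)}$ about a fixed point $o$, which are compact by completeness (Hopf--Rinow). Every compact $K\subset M$ is covered by the open sets $\operatorname{int}K_j$, so $K\subset K_j$ for all large $j$. Hence it suffices to produce $\eta_j\in C_c^\infty(M)$ with $0\le\eta_j\le1$, $\eta_j=1$ on $K_j$, and $\int_M|d\eta_j|^2\,d\hh_M^m\le 1/j$. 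The uniform convergence on each compact $K$ then holds in the strong form $\sup_K|\eta_j-1|=0$ for large $j$.

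Since $M$ is parabolic, the capacity criterion~\cite[Theorem~5.1(6)]{Grigorian} gives $\ccap_M(K_j)=0$. By the definition of capacity, there is $\varphi_j\in C_c^\infty(M)$ with $\varphi_j\ge1$ on a neighborhood of $K_j$ and $\int_M|d\varphi_j|^2\,d\hh_M^m<\frac1{4j}$. Such $\varphi_j$ need not lie in $[0,1]$, so we truncate it. Fix once and for all a smooth nondecreasing $\psi:\R\to[0,1]$ with $\psi(s)=0$ for $s\le 0$, $\psi(s)=1$ for $s\ge1$, and $0\le\psi'\le2$; such a function exists since $\int_0^1 \psi' =1$ requires only average slope $1$. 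Set $\eta_j=\psi\circ\varphi_j$. Then:
\begin{itemize}
\item $\eta_j$ is smooth;
\item $\eta_j$ vanishes outside $\supp\varphi_j$, so it is compactly supported;
\item $0\le\eta_j\le1$;
\item $\eta_j=1$ wherever $\varphi_j\ge1$, in particular on $K_j$;
\item by the chain rule, $|d\eta_j|=|\psi'(\varphi_j)|\,|d\varphi_j|\le2|d\varphi_j|$, so $\int_M|d\eta_j|^2\,d\hh_M^m\le4\cdot\frac1{4j}=\frac1j\to0$.
\end{itemize}
This gives~\eqref{eq:cutoff-energy-decay}.

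The main point requiring care is that the cited capacity is defined with $C_c^\infty$ test functions that are $\ge1$ near $K$ but otherwise unconstrained, possibly negative or larger than $1$. The smooth clamp $\psi$ handles this while keeping smoothness, which a Lipschitz truncation $\min(\max(\varphi,0),1)$ would destroy. No other obstacle is expected. One could alternatively derive the sequence from the divergence in Definition~\ref{def:parabolic} via Green functions of an exhaustion, but the capacity route is direct given the stated criterion.
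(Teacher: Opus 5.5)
Your proposal is correct and follows essentially the same route as the paper: a compact exhaustion, the capacity criterion $\ccap_M(K_j)=0$ to obtain $\varphi_j$ with small energy, and post-composition with a fixed smooth clamp so that the energy increases by at most a constant factor. The only differences are cosmetic. You fix $\|\psi'\|_\infty\le 2$ where the paper uses a general constant $C_\Theta$, and your exhaustion by closed geodesic balls also covers the compact case, which the paper treats separately with $\eta_j\equiv1$.
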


\begin{proof}
If $M$ is compact, take $\eta_j\equiv1$. Suppose that $M$ is
noncompact, and choose a compact exhaustion
$$
K_j\subset\operatorname{int}K_{j+1},
\qquad
M=\bigcup_{j=1}^\infty\operatorname{int}K_j.
$$
Fix a smooth nondecreasing function $\Theta:\R\to[0,1]$ such that
$\Theta=0$ on $(-\infty,0]$ and $\Theta=1$ on $[1,\infty)$, and
put $C_\Theta\coloneq\|\Theta'\|_\infty$. Since $M$ is parabolic, the capacity
criterion gives $\ccap_M(K_j)=0$. Thus one can choose
$\varphi_j\in C_c^\infty(M)$, with $\varphi_j\ge1$ on a neighborhood of
$K_j$, such that
$$
\int_M |d\varphi_j|^2\,d\hh_M^m<\frac{1}{jC_\Theta^2}.
$$
Set $\eta_j\coloneq\Theta\circ\varphi_j$. Then $\eta_j$ is smooth and
compactly supported, because $\Theta(0)=0$. Moreover,
$$
0\le\eta_j\le1,
\qquad
\eta_j\equiv1\text{ on a neighborhood of }K_j.
$$
The chain rule yields
$$
\int_M |d\eta_j|^2\,d\hh_M^m
\le C_\Theta^2\int_M |d\varphi_j|^2\,d\hh_M^m
<\frac{1}{j}.
$$
Finally, every compact set $K\subset M$ is contained in some $K_{j_0}$.
For every $j\ge j_0$, one has $\eta_j\equiv1$ on $K$. This proves the
asserted local uniform convergence.
\end{proof}

The next estimate relates the continuous heat semigroup to the Dirichlet
energy.

\begin{lem}\label{lem:heat-energy-estimate}
For every $f\in C_c^\infty(M)$ and every $t>0$,
\begin{equation}\label{eq:heat-energy-estimate}
0\le\langle f,(I-P_t^M)f\rangle_{L^2(M)}
\le t\int_M |df|^2\,d\hh_M^m.
\end{equation}
\end{lem}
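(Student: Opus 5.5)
The plan is to use the spectral theorem for the Friedrichs (Dirichlet) Laplacian, which generates the minimal heat semigroup. Let $L\ge0$ be the nonnegative self-adjoint operator on $L^2(M)$ with $P_t^M=e^{-tL}$. This is the Friedrichs extension of $-\Delta_M$ on $C_c^\infty(M)$, whose quadratic form is the closure of $f\mapsto\int_M|df|^2\,d\hh_M^m$. The identification of the minimal heat semigroup with $e^{-tL}$ is standard and I will take it as known.

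For $f\in C_c^\infty(M)$, let $\mu_f$ be the spectral measure of $L$ associated with $f$, supported on $[0,\infty)$. Then
$$
\langle f,(I-P_t^M)f\rangle_{L^2(M)}=\int_{[0,\infty)}(1-e^{-t\lambda})\,d\mu_f(\lambda),
\qquad
\int_M|df|^2\,d\hh_M^m=\int_{[0,\infty)}\lambda\,d\mu_f(\lambda).
$$
The second identity holds because $f$ lies in the form domain, and for smooth compactly supported $f$ the closed form agrees with the Dirichlet integral. Indeed $f\in\operatorname{dom}(L)$, and $\langle Lf,f\rangle=\langle-\Delta_M f,f\rangle=\int_M|df|^2$ by Green's formula, since $f$ has compact support.

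The estimate \eqref{eq:heat-energy-estimate} then reduces to the elementary pointwise inequality $0\le 1-e^{-s}\le s$ for $s\ge0$, applied with $s=t\lambda$ and integrated against $\mu_f$.

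The only step needing care is justifying that the minimal heat kernel is the kernel of $e^{-tL}$ for the Friedrichs extension, so that the spectral calculus applies to $P_t^M$.

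If one prefers to avoid the spectral theorem, there is an alternative route. Set $\phi(t)=\langle f,P_t^M f\rangle=\|P_{t/2}^M f\|_2^2$. This function is nonnegative, and it is convex and nonincreasing because $\phi'(t)=-\|L^{1/2}P_{t/2}^Mf\|^2$. Hence
$$
\phi(0)-\phi(t)\le -t\,\phi'(0^+)=t\int_M|df|^2\,d\hh_M^m,
$$
while nonnegativity of $\phi(0)-\phi(t)$ follows from monotonicity. The spectral argument is cleaner, and it is the one I will use.
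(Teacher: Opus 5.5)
Your argument is correct, but it follows a different route from the paper's. You work globally on $M$. You identify $P_t^M$ with $e^{-tL}$ for the Friedrichs extension $L$ of $-\Delta_M|_{C_c^\infty(M)}$, observe that $f\in\operatorname{dom}(L)$ with $\langle Lf,f\rangle=\int_M|df|^2\,d\hh_M^m$, and integrate $0\le 1-e^{-t\lambda}\le t\lambda$ against the spectral measure $\mu_f$.

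The paper never uses self-adjoint extensions on $M$. It takes an exhaustion by smooth relatively compact domains $\Omega_j\supset\supp f$ and works with the classical Dirichlet solution $u(s,\cdot)=P_s^{\Omega_j}f$. Two applications of Green's identity give $\frac{d}{ds}\|u(s,\cdot)\|_{L^2}^2=-2\cE_j(s)$ and $\cE_j'(s)=-2\|\Delta_M u\|_{L^2}^2\le0$. From this it deduces $\langle f,(I-P_t^{\Omega_j})f\rangle=2\int_0^{t/2}\cE_j(s)\,ds\le t\int_M|df|^2\,d\hh_M^m$. It then passes to the limit using $p_t^{\Omega_j}\nearrow p_t^M$ and dominated convergence. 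In effect, the paper runs your ``alternative route'' on each $\Omega_j$ with classical PDE tools: energy monotonicity there is the same as convexity of $\phi(t)=\|P_{t/2}f\|_{L^2}^2$.

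Your spectral version is shorter, and it applies verbatim to every $f$ in the form domain. Its cost is that all the content moves into the identification of the minimal heat kernel with the kernel of $e^{-tL}$. You take that identification as known, and its proof typically involves an exhaustion argument of the same kind. The paper's route stays with its own description of $p_t^M$ as the monotone limit of Dirichlet heat kernels. It needs only regularity on smooth bounded domains and the approximation $p_t^{\Omega_j}\nearrow p_t^M$.
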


\begin{proof}
Choose an exhaustion $\{\Omega_j\}_{j=1}^\infty$ of $M$ by smooth,
relatively compact domains such that $\supp f\subset\Omega_1$. Let
$p_t^{\Omega_j}$ and $P_t^{\Omega_j}$ denote the Dirichlet heat kernel and
Dirichlet heat semigroup on $\Omega_j$, respectively, and extend
$p_t^{\Omega_j}$ by zero outside $\Omega_j\times\Omega_j$. The construction
of the minimal heat kernel gives
$$
p_t^{\Omega_j}(x,y)\nearrow p_t^M(x,y),
$$
for every $t>0$ and all $x,y\in M$.

Fix $j$ and set $u(s,\cdot)=P_s^{\Omega_j}f$. Then
$$
\partial_s u=\Delta_M u\quad\text{on }\Omega_j,
\qquad
u=0\quad\text{on }\partial\Omega_j,
\qquad
u(0,\cdot)=f.
$$
Define
$$
\cE_j(s)\coloneq
\int_{\Omega_j}|du(s,\cdot)|^2\,d\hh_M^m.
$$
Green's identity and the heat equation yield
$$
\frac{d}{ds}\|u(s,\cdot)\|_{L^2(\Omega_j)}^2
=2\int_{\Omega_j}u\Delta_M u\,d\hh_M^m
=-2\cE_j(s).
$$
Since $u(s,\cdot)$ vanishes on $\partial\Omega_j$ for every $s$, so does
$\partial_s u$. A second application of Green's identity therefore gives
$$
\begin{aligned}
\cE_j'(s)
&=2\int_{\Omega_j}
  \langle du,d(\partial_s u)\rangle\,d\hh_M^m\\
&=-2\int_{\Omega_j}(\partial_s u)\Delta_M u\,d\hh_M^m\\
&=-2\int_{\Omega_j}|\Delta_M u|^2\,d\hh_M^m\le0.
\end{aligned}
$$
These identities remain valid at $s=0$, since $f$ vanishes near
$\partial\Omega_j$.
Thus $\cE_j(s)\le\cE_j(0)=\int_M|df|^2\,d\hh_M^m$. By symmetry and the
semigroup property of the Dirichlet heat semigroup,
$$
\langle f,P_t^{\Omega_j}f\rangle_{L^2(\Omega_j)}
=\|P_{\frac{t}{2}}^{\Omega_j}f\|_{L^2(\Omega_j)}^2.
$$
Consequently,
\begin{equation}\label{eq:dirichlet-energy-estimate}
\begin{aligned}
0\le{}
\langle f,(I-P_t^{\Omega_j})f\rangle_{L^2(\Omega_j)}
&=\|f\|_{L^2(\Omega_j)}^2
  -\|u(\frac{t}{2},\cdot)\|_{L^2(\Omega_j)}^2\\
&=-\int_0^{\frac{t}{2}}\frac{d}{ds}\|u(s,\cdot)\|_{L^2(\Omega_j)}^2\,ds\\
&=2\int_0^{\frac{t}{2}}\cE_j(s)\,ds\\
&\le t\int_M|df|^2\,d\hh_M^m.
\end{aligned}
\end{equation}

The dominated convergence theorem implies
$$
\lim_{j\to\infty}\langle f,P_t^{\Omega_j}f\rangle_{L^2(\Omega_j)}=
\langle f,P_t^M f\rangle_{L^2(M)}.
$$
Passing to the limit in \eqref{eq:dirichlet-energy-estimate} proves the
assertion.
\end{proof}

We also use the standard fact that parabolicity implies stochastic
completeness; see~\cite[Corollary~6.4]{Grigorian}.

\begin{prop}\label{prop:parabolic-stochastic-complete}
Every connected complete parabolic Riemannian manifold is stochastically
complete.
\end{prop}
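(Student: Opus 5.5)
We would prove Proposition~\ref{prop:parabolic-stochastic-complete} directly from the parabolic cutoff sequence of Lemma~\ref{lem:parabolic-cutoff} and the energy estimate of Lemma~\ref{lem:heat-energy-estimate}, rather than through the superharmonic-function characterization. Fix $t>0$. The key observation is a Beurling--Deny type decomposition of the quadratic form of $I-P_t^M$. For $f\in C_c^\infty(M)$,
\begin{equation*}
\langle f,(I-P_t^M)f\rangle_{L^2(M)}
=\int_M f^2(x)\bigl(1-P_t^M1(x)\bigr)\,d\hh_M^m(x)
+\frac12\int_M\int_M p_t^M(x,y)\bigl(f(x)-f(y)\bigr)^2\,d\hh_M^m(y)\,d\hh_M^m(x).
\end{equation*}
To verify it, expand $(f(x)-f(y))^2$ and use the symmetry in \eqref{eq:heat-kernel-identities} to identify $\iint p_t^M(x,y)f(y)^2$ with $\int f^2 P_t^M1$. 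All integrals are finite, because $f$ is bounded and compactly supported and $P_t^M1\le1$, so Tonelli and Fubini apply.

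Both terms on the right are nonnegative, since $P_t^M1\le1$ by the sub-Markov property. Applying the identity with $f=\eta_j$ from Lemma~\ref{lem:parabolic-cutoff} and discarding the double integral, Lemma~\ref{lem:heat-energy-estimate} gives
\begin{equation*}
0\le\int_M \eta_j^2\bigl(1-P_t^M1\bigr)\,d\hh_M^m\le t\int_M|d\eta_j|^2\,d\hh_M^m .
\end{equation*}
Now fix a compact set $K\subset M$. By the local uniform convergence $\eta_j\to1$ on $K$, we have $\eta_j^2\ge\tfrac12$ on $K$ for all large $j$. Hence
\begin{equation*}
\int_K\bigl(1-P_t^M1\bigr)\,d\hh_M^m\le 2t\int_M|d\eta_j|^2\,d\hh_M^m\longrightarrow0
\end{equation*}
by \eqref{eq:cutoff-energy-decay}. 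Therefore $1-P_t^M1=0$ almost everywhere on $K$, and hence almost everywhere on $M$, since $M$ is exhausted by countably many compact sets.

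To upgrade this to every point, we would use the semigroup property: $P_t^M1=P_{t/2}^M(P_{t/2}^M1)$. Since $P_{t/2}^M1=1$ almost everywhere, the right-hand side equals $P_{t/2}^M1(x)$ for every $x$. Writing $P_t^M1=P_{t/2}^M1$ pointwise and using $P_{t/2}^M1=1$ a.e., continuity then gives the conclusion everywhere. Alternatively, one can note that $x\mapsto P_t^M1(x)$ is continuous, which follows from smoothness of the minimal heat kernel and monotone approximation.

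The main point requiring care is the decomposition identity, specifically the justification of the symmetric rearrangement, since $p_t^M$ need not be integrable in a way that makes $P_t^M1$ equal to $1$ a priori. Because $f$ has compact support and all integrands are nonnegative after expansion, however, Tonelli handles this without difficulty. Everything else is a direct combination of Lemmas~\ref{lem:parabolic-cutoff} and~\ref{lem:heat-energy-estimate}.
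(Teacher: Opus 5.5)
Your argument is correct, and it takes a genuinely different route from the paper. The paper gives no proof of Proposition~\ref{prop:parabolic-stochastic-complete}; it cites \cite[Corollary~6.4]{Grigorian}. The standard proof of that result combines the characterization of stochastic completeness through bounded solutions of $\Delta v=\lambda v$ with the fact that a parabolic manifold carries no nonconstant positive superharmonic function.

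You instead run the continuous version of the Dirichlet-form principle ``recurrent implies conservative''. The Beurling--Deny splitting isolates the killing term $\int_M f^2(1-P_t^M1)\,d\hh_M^m$. The cutoffs of Lemma~\ref{lem:parabolic-cutoff}, together with Lemma~\ref{lem:heat-energy-estimate}, then force this term to vanish. What this buys is self-containedness. You use only tools the paper has already set up: the capacity criterion behind the cutoff sequence is still cited, but nothing from the stochastic-completeness theory in Section~6 of the survey is needed. The argument also sits naturally beside the form-theoretic reasoning of Section~\ref{sec:integral-operators}, since the same cutoffs then serve both for conservativeness and for recurrence of $\cE_K$. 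The citation buys only brevity.

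One step should be tightened, namely the passage from ``almost everywhere'' to ``everywhere''.
\begin{itemize}
\item The identity $P_t^M1(x)=\int_M p_{t/2}^M(x,w)P_{t/2}^M1(w)\,d\hh_M^m(w)$ shows only that $P_t^M1=P_{t/2}^M1$ pointwise. That does not by itself remove the exceptional null set.
\item Monotone approximation by the Dirichlet solutions $P_t^{\Omega_j}1$ gives lower semicontinuity of $P_t^M1$, which is the wrong direction. To exclude a dip at a single point you need upper semicontinuity or full continuity.
\end{itemize}
You can close the gap in either of two ways.
\begin{itemize}
\item Invoke the standard fact that $P_t^M1$ is a smooth bounded solution of the heat equation. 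The paper relies on the same fact for $P_t^Mh$ in Corollary~\ref{cor:harmonic-fixed-by-heat}.
\item Note that your computation actually gives $P_t^M1(x)=P_s^M1(x)$ for every $0<s<t$, because $P_{t-s}^M1=1$ almost everywhere. Then let $s\to0^+$, using $P_s^M1(x)\ge P_s^M\phi(x)\to\phi(x)=1$ for some $\phi\in C_c^\infty(M)$ with $0\le\phi\le1$ and $\phi(x)=1$.
\end{itemize}
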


\subsection{Bounded heat solutions and Riemannian products}

The following uniqueness theorem is another standard characterization of
stochastic completeness; see~\cite[Theorem~6.2 and
Corollary~6.3]{Grigorian}. That reference uses the generator
$\frac{1}{2}\Delta_M$; rescaling the time variable gives the normalization
used here.

\begin{thm}\label{thm:bounded-heat-uniqueness}
A connected complete Riemannian manifold $M$ is stochastically complete if
and only if the following uniqueness property holds: for every $T>0$, the
only bounded function
$$
W\in C^{2,1}(M\times(0,T])\cap C(M\times[0,T])
$$
satisfying
$$
\partial_t W=\Delta_M W,
\qquad
W(\cdot,0)=0,
$$
is $W\equiv0$.
\end{thm}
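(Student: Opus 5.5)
The plan is to prove the two implications separately. Both go through the comparison functions $1-P_t^{\Omega}1$ built from Dirichlet heat semigroups on relatively compact domains, together with the monotone convergence $p_t^{\Omega_j}\nearrow p_t^M$ used in the proof of Lemma~\ref{lem:heat-energy-estimate}.

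\emph{Uniqueness implies stochastic completeness.} Argue by contraposition and set $W(x,t)\coloneq 1-P_t^M1(x)$ for $t>0$ and $W(\cdot,0)\coloneq0$.
\begin{itemize}
\item Boundedness: $0\le W\le1$ by the sub-Markov property.
\item Regularity: since $p_t^M$ is a smooth fundamental solution and $P_t^M1$ is the increasing limit of the smooth Dirichlet solutions $P_t^{\Omega_j}1$, local parabolic estimates give $W\in C^{2,1}(M\times(0,T])$ and $\partial_tW=\Delta_MW$.
\item Continuity at $t=0$: for $x$ in a compact set $K\subset\Omega\Subset M$, we have $P_t^{\Omega}1\le P_t^M1\le1$. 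The classical solution $P_t^{\Omega}1$ of the Dirichlet problem on a smooth bounded domain tends to $1$ uniformly on $K$ as $t\to0$. Hence $W$ extends continuously to $M\times[0,T]$ with $W(\cdot,0)=0$.
\end{itemize}
If $M$ is not stochastically complete, then $W\not\equiv0$ for some $T$, which contradicts uniqueness.

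\emph{Stochastic completeness implies uniqueness.} Let $W$ be a bounded solution with zero initial data and put $C\coloneq\sup|W|$. It suffices to show $W\le0$, since applying the same argument to $-W$ then gives $W\equiv0$.
\begin{itemize}
\item Take the exhaustion $\Omega_j$ by smooth relatively compact domains, and choose $\varphi_j\in C_c^\infty(\Omega_j)$ with $0\le\varphi_j\le1$ and $\varphi_j=1$ on $\overline{\Omega_{j-1}}$.
\item Let $u_j(t)=P_t^{\Omega_j}\varphi_j$. This is a classical solution on $\Omega_j$ that is continuous up to $t=0$, with $u_j(0)=\varphi_j$ and $u_j=0$ on $\partial\Omega_j$. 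Using $\varphi_j$ instead of $1$ avoids the incompatibility of the data at the corner $\partial\Omega_j\times\{0\}$.
\item Set $v_j\coloneq W-C(1-u_j)$ on $\overline{\Omega_j}\times[0,T]$. Then $v_j$ solves the heat equation, $v_j=W-C\le0$ on $\partial\Omega_j\times[0,T]$, and $v_j(\cdot,0)=-C(1-\varphi_j)\le0$.
\item The weak parabolic maximum principle on the bounded cylinder gives $v_j\le0$, that is, $W\le C(1-P_t^{\Omega_j}\varphi_j)$.
\end{itemize}
Since $P_t^{\Omega_{j-1}}1\le P_t^{\Omega_j}\varphi_j\le P_t^M1$, and $P_t^{\Omega_{j-1}}1\nearrow P_t^M1$ by monotone convergence of the kernels, letting $j\to\infty$ yields $W\le C(1-P_t^M1)=0$ by stochastic completeness.

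\emph{Main obstacle.} The delicate step is regularity at the parabolic boundary. This means checking that the Dirichlet solutions are continuous on the closed cylinder so that the maximum principle applies, and that the local uniform convergence $P_t^M1\to1$ as $t\to0$ holds. The smoothed data $\varphi_j$ handles the first point, and comparison with a fixed Dirichlet domain handles the second. Both rest on standard parabolic theory on smooth bounded domains, which I would cite rather than reprove, consistent with the paper's reliance on~\cite{Grigorian}.
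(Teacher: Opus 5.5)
Your proof is correct. It differs from the paper only in that the paper gives no argument of its own: it cites Grigor'yan's survey (Theorem~6.2 and Corollary~6.3) and remarks that the time variable must be rescaled, since that reference uses the generator $\frac{1}{2}\Delta_M$.

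You instead give the classical comparison proof in both directions.
\begin{enumerate}
\item The function $1-P_t^M1$ is a bounded solution with zero initial data. It is nonzero exactly when stochastic completeness fails.
\item Conversely, the weak maximum principle on bounded cylinders traps a bounded solution with zero data below $C(1-P_t^{\Omega_j}\varphi_j)$. This bound increases in the limit to $C(1-P_t^M1)=0$.
\end{enumerate}
Your use of the compactly supported data $\varphi_j$ is a good choice. Comparing directly with $1-P_t^{\Omega_j}1$ would require a maximum principle that tolerates the discontinuity at $\partial\Omega_j\times\{0\}$. With $\varphi_j$, only the textbook version on a closed cylinder is needed. The one detail left implicit is that the exhaustion should satisfy $\overline{\Omega_{j-1}}\subset\Omega_j$, so that such $\varphi_j$ exist.

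The citation buys brevity, and it places the statement among the other standard characterizations of stochastic completeness in that survey. Your route has three advantages. It works directly with $\partial_tW=\Delta_MW$, so no rescaling is needed. It uses only the exhaustion $p_t^{\Omega_j}\nearrow p_t^M$, already invoked in the proof of Lemma~\ref{lem:heat-energy-estimate}, together with the classical maximum principle. It also makes visible that neither completeness nor connectedness plays any role.

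Note also that the paper uses only your second half, that stochastic completeness implies uniqueness. This is applied in Corollary~\ref{cor:harmonic-fixed-by-heat}, so your self-contained argument would suffice to make that step independent of the reference.
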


\begin{cor}\label{cor:harmonic-fixed-by-heat}
If $M$ is stochastically complete and
$h\in\Hinf(M)$, then
\begin{equation}\label{eq:harmonic-heat-invariance}
P_t^M h=h,
\end{equation}
for every $t>0$.
\end{cor}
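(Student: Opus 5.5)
We apply Theorem~\ref{thm:bounded-heat-uniqueness} to the difference between the time-dependent function $P_t^M h$ and the stationary function $h$. Fix $h\in\Hinf(M)$ and $T>0$, and define
$$
W(x,t)\coloneq P_t^M h(x)-h(x)\quad (t>0),\qquad W(x,0)\coloneq0 .
$$
Since $|P_t^M h|\le P_t^M|h|\le\|h\|_{L^\infty}$ by positivity and the sub-Markov property, $W$ is bounded by $2\|h\|_{L^\infty}$. Moreover $h$ is harmonic, so it is a time-independent solution of the heat equation. Once we know that $(x,t)\mapsto P_t^M h(x)$ solves the heat equation on $M\times(0,T]$ and is continuous up to $t=0$ with initial value $h$, the function $W$ satisfies all the hypotheses of Theorem~\ref{thm:bounded-heat-uniqueness}. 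The theorem then gives $W\equiv0$, which is \eqref{eq:harmonic-heat-invariance} for $t\le T$. Since $T$ is arbitrary, the identity holds for every $t>0$.

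The first regularity point is standard. For a bounded measurable $f$, the function $P_t^M f(x)=\int_M p_t^M(x,y)f(y)\,d\hh_M^m(y)$ is smooth in $(x,t)$ on $M\times(0,\infty)$ and satisfies $\partial_t P_t^M f=\Delta_M P_t^M f$. This follows by differentiating under the integral. The justification uses local uniform integrability of $\partial_t p_t^M(\cdot,y)$ and $\nabla^k_x p_t^M(\cdot,y)$ in $y$. One obtains it by writing $p_t^M(x,y)=\int_M p_{t/2}^M(x,z)p_{t/2}^M(z,y)\,d\hh_M^m(z)$ and using local parabolic estimates for the first factor together with $\int_M p_{t/2}^M(z,y)\,d\hh_M^m(y)\le1$. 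Alternatively one can cite the standard fact that the minimal heat semigroup maps $L^\infty$ into smooth solutions of the heat equation.

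The main obstacle is continuity at $t=0$, namely $P_t^M h(x)\to h(x)$ locally uniformly as $t\to0$. I would prove this by localization. Fix $x_0$ and a small geodesic ball $B=B(x_0,r)$, and choose a cutoff $\chi\in C_c^\infty(M)$ with $\chi=1$ on $B$. Split $h=\chi h+(1-\chi)h$ and treat the two pieces separately.

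\begin{itemize}
\item \emph{Compactly supported part.} Since $\chi h\in C_c(M)$, the standard approximation of the identity property of the heat kernel gives $P_t^M(\chi h)\to\chi h$ uniformly on compact sets. One proof compares $p_t^M$ with the Dirichlet kernel $p_t^{\Omega}$ of a relatively compact smooth domain $\Omega\supset\supp\chi$, using the monotone convergence from the proof of Lemma~\ref{lem:heat-energy-estimate}.
\item \emph{Far part.} For $x\in B(x_0,r/2)$ we have
$$
|P_t^M((1-\chi)h)(x)|\le\|h\|_{L^\infty}\int_{M\setminus B}p_t^M(x,y)\,d\hh_M^m(y).
$$
This tends to $0$ uniformly in such $x$ as $t\to0$. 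Indeed, stochastic completeness gives $\int_M p_t^M(x,y)\,d\hh_M^m(y)=1$. Combining this with the previous bullet applied to a cutoff equal to $1$ near $x$ and supported in $B$, the mass inside $B$ tends to $1$, so the mass outside $B$ tends to $0$.
\end{itemize}

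Stochastic completeness therefore enters twice: once in this tail estimate, and once, decisively, through the uniqueness statement of Theorem~\ref{thm:bounded-heat-uniqueness}.

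Combining the two bullets gives $W\in C(M\times[0,T])$ with $W(\cdot,0)=0$. Hence $W\equiv0$, which proves \eqref{eq:harmonic-heat-invariance}.
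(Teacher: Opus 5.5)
Your proposal is correct and follows the paper's route: you apply Theorem~\ref{thm:bounded-heat-uniqueness} to the difference between $P_t^M h$ and the stationary solution $h$. Your extra paragraphs supply the smoothness of $P_t^M h$ and its locally uniform convergence to $h$ as $t\to0$, which the paper takes as standard. One small remark: the tail estimate only needs the sub-Markov bound $P_t^M 1\le 1$ (the mass in $B$ tends to $1$ and the total mass is at most $1$), so stochastic completeness is genuinely used only in the uniqueness step.
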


\begin{proof}
The functions $U(x,t)=h(x)$ and $V(x,t)=P_t^M h(x)$ are bounded
solutions of the heat equation with the same initial value $h$.
Theorem~\ref{thm:bounded-heat-uniqueness}, applied to $U-V$, gives
$U=V$.
\end{proof}

We record the product identities used below.

\begin{lem}\label{lem:product-heat-kernel}
Let $M$ and $N$ be connected complete Riemannian manifolds of dimensions
$m$ and $n$, respectively. Then
\begin{align}
\hh_{M\times N}^{m+n}
&=\hh_M^m\otimes\hh_N^n,\label{eq:product-measure}\\
\Delta_{M\times N}
&=\Delta_M+\Delta_N,\label{eq:product-laplacian}\\
p_t^{M\times N}\bigl((x,y),(z,w)\bigr)
&=p_t^M(x,z)p_t^N(y,w).\label{eq:product-heat-kernel}
\end{align}
Equation~\eqref{eq:product-heat-kernel} holds for every $t>0$, $x,z\in M$, and
$y,w\in N$.
Consequently, if $M$ and $N$ are stochastically complete, then so is
$M\times N$.
\end{lem}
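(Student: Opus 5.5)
The three identities are proved in order: the measure identity first, then the Laplacian, then the heat kernel. The stochastic-completeness consequence then follows by integrating the product kernel.

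For \eqref{eq:product-measure}, the product metric $g_M\oplus g_N$ has Riemannian volume density $\sqrt{\det g_M}\sqrt{\det g_N}$ in product charts. On a Riemannian manifold the normalized Hausdorff measure $\hh^m$ coincides with the Riemannian volume measure, by the standard area-formula comparison in charts. Hence in each product chart $\hh_{M\times N}^{m+n}$ and $\hh_M^m\otimes\hh_N^n$ agree on products of Borel sets. Both measures are $\sigma$-finite, so uniqueness of extension on the $\pi$-system of measurable rectangles gives equality.

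For \eqref{eq:product-laplacian}, write $\Delta=\frac{1}{\sqrt{\det g}}\partial_i(\sqrt{\det g}\,g^{ij}\partial_j)$ in product coordinates. The inverse metric is block diagonal, and $\sqrt{\det g}$ factors into a function of $x$ times a function of $y$. The formula therefore splits into $\Delta_M+\Delta_N$.

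For \eqref{eq:product-heat-kernel}, which I expect to be the main point, set $q_t\coloneq p_t^M(x,z)p_t^N(y,w)$. By \eqref{eq:product-laplacian} it is a smooth positive solution of the heat equation on $M\times N$. It also converges to $\delta_{(x,y)}$ as $t\to0$ in the sense of fundamental solutions, since it is a tensor product of approximate identities. Minimality of $p_t^{M\times N}$ then gives $p_t^{M\times N}\le q_t$.

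For the reverse inequality I would use exhaustions. Let $\Omega_j\nearrow M$ and $U_j\nearrow N$ be relatively compact smooth domains. On the product domain $\Omega_j\times U_j$, whose boundary is only Lipschitz, the Dirichlet heat kernel equals $p_t^{\Omega_j}p_t^{U_j}$. This follows from separation of variables: the eigenfunctions $\phi_k\otimes\psi_l$ of the two Dirichlet problems form a complete orthonormal basis of $L^2(\Omega_j\times U_j)$, and they are Dirichlet eigenfunctions of $\Delta_M+\Delta_N$. Equivalently, the Dirichlet semigroup of the product form is the tensor product of the two semigroups. Any increasing exhaustion of $M\times N$ by relatively compact open sets yields $p_t^{M\times N}$ as the monotone limit, since the minimal kernel is independent of the exhaustion. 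The Dirichlet kernel is domain-monotone, so $p_t^{\Omega_j\times U_j}\le p_t^{M\times N}$. Letting $j\to\infty$ and using $p_t^{\Omega_j}\nearrow p_t^M$ and $p_t^{U_j}\nearrow p_t^N$, as in the proof of Lemma~\ref{lem:heat-energy-estimate}, gives $q_t\le p_t^{M\times N}$. One can even dispense with the first inequality: the monotone limit of $p_t^{\Omega_j\times U_j}$ is itself the minimal kernel, so \eqref{eq:product-heat-kernel} follows directly. The only delicate issue is that the product domains have corners. I handle this in the $L^2$/form sense, defining Dirichlet kernels via $W_0^{1,2}$, for which the tensor-product description is clean.

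Finally, suppose $M$ and $N$ are stochastically complete. By \eqref{eq:product-measure}, \eqref{eq:product-heat-kernel} and Tonelli,
$$
P_t^{M\times N}1(x,y)=\int_M p_t^M(x,z)\,d\hh_M^m(z)\int_N p_t^N(y,w)\,d\hh_N^n(w)=1,
$$
so $M\times N$ is stochastically complete by Definition~\ref{def:stochastic-complete}.
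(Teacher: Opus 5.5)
The paper gives no proof of this lemma. It records the three identities as standard facts, and it uses the stochastic-completeness consequence exactly as you derive it, by Tonelli. Your proposal is a correct way to supply the omitted proof.

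The measure identity follows from the equality of $\hh^m$ with Riemannian volume together with a $\pi$-system argument. The coordinate splitting of the Laplacian is routine.

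For the kernel, your separation-of-variables argument on $\Omega_j\times U_j$, with Dirichlet kernels defined through $W_0^{1,2}$, is sound. Each $\phi_k\otimes\psi_l$ lies in $W_0^{1,2}(\Omega_j\times U_j)$ and satisfies the weak eigen-equation by Fubini. A complete orthonormal eigenbasis then determines the self-adjoint Dirichlet Laplacian on the product domain.

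The one step that deserves an extra line is the claim that the Lipschitz exhaustion $\Omega_j\times U_j$ produces $p_t^{M\times N}$. Sandwich it with a smooth exhaustion $D_k$ of $M\times N$: each $\overline{D_k}$ lies in some $\Omega_j\times U_j$, and each $\overline{\Omega_j\times U_j}$ lies in some $D_k$. Domain monotonicity of the form-sense Dirichlet kernels then shows the two limits coincide.

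Your argument never uses completeness. Under the paper's hypotheses there is a shorter route that avoids corners altogether:
\begin{itemize}
\item Since $M$, $N$ and $M\times N$ are complete, the Laplacians are essentially self-adjoint on $C_c^\infty$.
\item $C_c^\infty(M)\otimes_{\mathrm{alg}}C_c^\infty(N)$ is a core for the closure of $\Delta_M\otimes I+I\otimes\Delta_N$.
\item That closure is therefore a self-adjoint restriction of the self-adjoint closure of $\Delta_{M\times N}$, so the two are equal.
\item Hence $e^{t\Delta_{M\times N}}=e^{t\Delta_M}\otimes e^{t\Delta_N}$, the kernels agree almost everywhere, and they agree everywhere by continuity.
\end{itemize}
Your exhaustion argument costs more bookkeeping, but it works for arbitrary, possibly incomplete, manifolds.
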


\section{Recurrent positive integral operators}\label{sec:integral-operators}

This section develops the abstract integral-operator result used in the proof.
All functions in this section are real-valued. Let $(X,\B,\mu)$ be a
countably generated, $\sigma$-finite measure space, and let
$k:X\times X\to(0,\infty)$ be jointly measurable. Assume that
$$
k(x,y)=k(y,x),
\qquad
x,y\in X,
$$
and that
$$
\int_X k(x,y)\,d\mu(y)=1,
\qquad
x\in X.
$$

For each $x\in X$, the measure
$$
\nu_x(A)\coloneq\int_A k(x,y)\,d\mu(y),
\qquad
A\in\B
$$
is a probability measure on $X$.

Let $\B_b(X)$ denote the space of bounded $\B$-measurable
functions on $X$. For $f\in\B_b(X)$, define
$$
Kf(x)=\int_X k(x,y)f(y)\,d\mu(y),
\qquad
x\in X.
$$

The operator $K$ is positive and conservative on $\B_b(X)$, and its kernel is
symmetric. The same integral formula defines $Kf$, with values in
$[0,\infty]$, for every nonnegative $\B$-measurable function $f$. For such
$f$, Tonelli's theorem, symmetry, and conservativity yield the invariance
identity
\begin{equation}\label{eq:measure-invariance}
\int_X Kf\,d\mu=\int_X f\,d\mu,
\end{equation}
where the equality is understood in $[0,\infty]$. In other words, $\mu$ is
an invariant measure.

For $f\in\B_b(X)\cap L^2(X)$, the Cauchy--Schwarz inequality and
\eqref{eq:measure-invariance} imply
$$
\|Kf\|_{L^2}^2\le\int_X K(f^2)\,d\mu=\|f\|_{L^2}^2.
$$

Consequently, $K$ extends uniquely to a contraction on $L^2(X)$, and
symmetry of $k$ makes this extension self-adjoint.

For $f, g\in L^2(X)$, the associated discrete Dirichlet form is defined by
$$
\begin{aligned}
\cE_K(f,g)
&\coloneq\langle f,(I-K)g\rangle_{L^2}\\
&=\frac{1}{2}\iint_{X\times X}
\bigl(f(x)-f(y)\bigr)\bigl(g(x)-g(y)\bigr)
k(x,y)\,d\mu(x)\,d\mu(y).
\end{aligned}
$$

\begin{defn}\label{def:recurrent-form}
The form $\cE_K$ is \emph{recurrent} if there exists a sequence of
measurable functions $\{f_j\}_{j=1}^\infty$, each supported on a set of
finite $\mu$-measure, such that
$$
0\le f_j\le1,
\qquad
\lim_{j\to\infty}f_j=1,
\qquad
\mu\text{-a.e.},
$$
and
$$
\lim_{j\to\infty}\cE_K(f_j,f_j)=0.
$$
\end{defn}

\subsection{Reduced functions}
For $A\in\B$, define
$$
r_{A,0}=1_A,
$$
and, recursively,
$$
r_{A,m+1}=1_A+1_{A^c}Kr_{A,m}.
$$
Here $1_A$ is the indicator function of $A$. We call a nonnegative
measurable function $v$ $K$-\emph{excessive} if $Kv\le v$, and
$K$-\emph{harmonic} if $Kv=v$.

\begin{lem}\label{lem:reduction-properties}
The following statements hold.
\begin{enumerate}
\item $0\le r_{A,m}\le1$ for every $m$.
\item $r_{A,m}\le r_{A,m+1}$ for every $m$.
\item The pointwise limit
$$
r_A\coloneq\lim_{m\to\infty}r_{A,m},
$$
exists and satisfies
$$
r_A=1_A+1_{A^c}Kr_A.
$$
\item In particular,
$$
Kr_A\le r_A.
$$
\item The function $r_A$ is the least nonnegative $K$-excessive majorant of
$1_A$.
\end{enumerate}
\end{lem}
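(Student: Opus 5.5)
All five items follow from induction on $m$ together with two properties of $K$: positivity (monotonicity), and conservativity ($K1=1$).

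\emph{Step 1: items (1) and (2).} For (1), the case $m=0$ is clear since $r_{A,0}=1_A$. If $0\le r_{A,m}\le 1$, then positivity and $K1=1$ give $0\le Kr_{A,m}\le 1$. Because $A$ and $A^c$ are disjoint, $r_{A,m+1}=1_A+1_{A^c}Kr_{A,m}$ takes values in $[0,1]$. Measurability of each $r_{A,m}$ is preserved because $K$ maps $\B_b(X)$ into $\B_b(X)$; this uses joint measurability of $k$ and Tonelli.

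For (2), compare $r_{A,1}=1_A+1_{A^c}K1_A$ with $r_{A,0}=1_A$; the difference $1_{A^c}K1_A$ is nonnegative. If $r_{A,m}\le r_{A,m+1}$, then monotonicity of $K$ gives $Kr_{A,m}\le Kr_{A,m+1}$. Multiplying by $1_{A^c}$ and adding $1_A$ yields $r_{A,m+1}\le r_{A,m+2}$.

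\emph{Step 2: items (3) and (4).} By (1) and (2), the sequence $r_{A,m}$ is nondecreasing and bounded by $1$. So the pointwise limit $r_A$ exists, is measurable, and takes values in $[0,1]$.

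By monotone convergence applied to the integrals against the probability measures $\nu_x$, we get $Kr_{A,m}(x)\nearrow Kr_A(x)$ for each $x$. Letting $m\to\infty$ in the recursion gives $r_A=1_A+1_{A^c}Kr_A$.

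For (4), split into two cases.
\begin{itemize}
\item On $A$, we have $r_A=1\ge Kr_A$, since $Kr_A\le K1=1$.
\item On $A^c$, the identity from (3) gives $r_A=Kr_A$.
\end{itemize}
Hence $Kr_A\le r_A$ everywhere.

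\emph{Step 3: item (5).} By (3), $r_A\ge 1_A$, and by (4) it is $K$-excessive. For minimality, let $v\ge 0$ be measurable, possibly $[0,\infty]$-valued, with $Kv\le v$ and $v\ge 1_A$. Show by induction that $v\ge r_{A,m}$ for every $m$.
\begin{itemize}
\item The base case $m=0$ is the hypothesis $v\ge 1_A$.
\item For the inductive step, assume $v\ge r_{A,m}$. On $A$, we have $r_{A,m+1}=1\le v$. On $A^c$, we have $r_{A,m+1}=Kr_{A,m}\le Kv\le v$, using monotonicity of $K$ on nonnegative functions.
\end{itemize}
Letting $m\to\infty$ gives $v\ge r_A$.

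The only point needing care is justifying that $K$ commutes with the increasing limit in (3). Monotone convergence handles this. Allowing $v$ to be extended-real-valued in (5) causes no difficulty, because $K$ is defined on nonnegative measurable functions with values in $[0,\infty]$.
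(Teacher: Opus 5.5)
Your proposal is correct and follows the paper's proof essentially step for step. Both use induction with positivity and $K1=1$ for (1) and (2), monotone convergence to pass the limit through $K$ in (3), the case split on $A$ and $A^c$ for (4), and the induction $r_{A,m}\le v$ for minimality in (5); your added remarks on measurability of the iterates and on allowing $[0,\infty]$-valued excessive $v$ only make explicit what the paper leaves implicit.
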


\begin{proof}
We prove the first statement by induction. The base case
$r_{A,0}=1_A$ is immediate. If $0\le r_{A,m}\le1$, then positivity and
$K1=1$ imply $0\le Kr_{A,m}\le1$. By its recursive definition,
$r_{A,m+1}$ equals one on $A$ and $Kr_{A,m}$ on $A^c$. Hence
$0\le r_{A,m+1}\le1$.

We next prove monotonicity. The base case $r_{A,0}\le r_{A,1}$ is
immediate. If $r_{A,m-1}\le r_{A,m}$, then positivity of $K$ implies
$$
1_A+1_{A^c}Kr_{A,m-1}
\le1_A+1_{A^c}Kr_{A,m}.
$$
This completes the induction.

Since the sequence is increasing and bounded, it converges pointwise to
$r_A$. Applying the monotone convergence theorem to the integral defining
$K$ shows that
$$
r_A=1_A+1_{A^c}Kr_A.
$$

On $A^c$, this implies $r_A=Kr_A$. On $A$, one has $r_A=1$ and
$Kr_A\le K1=1$. Hence $Kr_A\le r_A$ everywhere.

Finally, suppose $v\ge0$, $v\ge1_A$, and $Kv\le v$. We prove by induction
that $r_{A,m}\le v$ for every $m$. The base case is
$$
r_{A,0}=1_A\le v,
$$
and if $r_{A,m}\le v$, then
$$
\begin{aligned}
r_{A,m+1}
&=1_A+1_{A^c}Kr_{A,m}\\
&\le1_A+1_{A^c}Kv\le v.
\end{aligned}
$$
Letting $m\to\infty$ proves that $r_A\le v$.
\end{proof}

For $m\ge1$, define
$$
H_{A,m}\coloneq Kr_{A,m-1}.
$$

By monotone convergence,
$$
H_A\coloneq\lim_{m\to\infty}H_{A,m}=Kr_A.
$$

The next lemma converts Dirichlet-form recurrence into the reduction property
used below.

\begin{lem}\label{lem:excessive-constant}
If $\cE_K$ is recurrent, then every bounded nonnegative function $g$
satisfying
$$
Kg\le g,
$$
is constant $\mu$-almost everywhere.
\end{lem}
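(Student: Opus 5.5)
The plan is to show that $g$ has zero $\cE_K$-energy once it is localized by the recurrence sequence. Because $k>0$, zero energy forces $g$ to be constant. Fix $g$ bounded, nonnegative, with $Kg\le g$, and set $h\coloneq g-Kg\ge0$, so $0\le h\le\|g\|_\infty$. Let $\{f_j\}$ be as in Definition~\ref{def:recurrent-form}, and write
$$
E_j\coloneq\cE_K(f_j,f_j),
\qquad
B_j\coloneq\iint_{X\times X} f_j(x)^2\bigl(g(x)-g(y)\bigr)^2k(x,y)\,d\mu(x)\,d\mu(y).
$$
Since each $f_j$ is bounded and supported on a set of finite measure, $f_j\in L^1\cap L^2$. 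Together with $\int k(x,y)\,d\mu(y)=1$, this makes every double integral below absolutely convergent. It also justifies the Tonelli/symmetry manipulations and \eqref{eq:measure-invariance}. The goal is $B_j\le C E_j$ with $C$ depending only on $\|g\|_\infty$.

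\textbf{Step 1 (algebraic identity).} Expand the square in $B_j$. Use $\int f^2K(g^2)\,d\mu=\int K(f^2)\,g^2\,d\mu$ (symmetry of $k$) and $Kg=g-h$. This gives
$$
B_j=\int_X g^2\,(K-I)(f_j^2)\,d\mu+2\int_X f_j^2\,g\,h\,d\mu .
$$

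\textbf{Step 2 (bounding each term by $E_j^{1/2}B_j^{1/2}$).} For the first term, rewrite it as
$$
-\tfrac12\iint\bigl(g^2(x)-g^2(y)\bigr)\bigl(f_j(x)-f_j(y)\bigr)\bigl(f_j(x)+f_j(y)\bigr)k\,d\mu\,d\mu .
$$
Use $|g^2(x)-g^2(y)|\le2\|g\|_\infty|g(x)-g(y)|$ and apply Cauchy--Schwarz. The factor $(f_j(x)+f_j(y))^2\le2f_j(x)^2+2f_j(y)^2$ is symmetrized using $k(x,y)=k(y,x)$. This bounds the first term by $C\,E_j^{1/2}B_j^{1/2}$.

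For the second term, use $g\le\|g\|_\infty$ and the polarization identity
$$
\int f_j^2\,h\,d\mu=\cE_K(f_j^2,g)=\tfrac12\iint\bigl(f_j(x)-f_j(y)\bigr)\bigl(f_j(x)+f_j(y)\bigr)\bigl(g(x)-g(y)\bigr)k\,d\mu\,d\mu .
$$
This identity is valid because $f_j^2\in L^1$ and $g$ is bounded. Applying the same Cauchy--Schwarz step gives again $C\,E_j^{1/2}B_j^{1/2}$.

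\textbf{Step 3 (conclusion).} Steps~1--2 give $B_j\le C\,E_j^{1/2}B_j^{1/2}$. Here $B_j<\infty$, since $B_j\le 4\|g\|_\infty^2\|f_j\|_{L^2}^2$. Hence $B_j\le C^2E_j\to0$. Since $f_j\to1$ $\mu$-a.e., Fatou's lemma yields
$$
\iint(g(x)-g(y))^2k(x,y)\,d\mu\,d\mu=0 .
$$
Strict positivity of $k$ then forces $g(x)=g(y)$ for $\mu\otimes\mu$-a.e.\ $(x,y)$. By Fubini, $g$ is $\mu$-a.e.\ constant.

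The main obstacle is Step~2. A naive Cauchy--Schwarz on $\cE_K(f_j,g)$ needs the global energy of $g$, which may be infinite. Weighting by $f_j^2$ and closing the self-improving inequality in $B_j$ avoids this. The delicate points are keeping the symmetrization with $k(x,y)=k(y,x)$ consistent and checking that every integral is finite, so that the rearrangements are legitimate.
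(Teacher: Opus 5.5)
Your proof is correct, and it takes a genuinely different route from the paper's.

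The paper argues in two passes. First, a discrete Picone (ground-state) inequality $\int_X \frac{u^2}{g+\varepsilon}(g-Kg)\,d\mu\le\cE_K(u,u)$, applied to the recurrent sequence together with Fatou's lemma, gives $Kg=g$ almost everywhere. Then Jensen's inequality shows that $q=C^2-g^2$ is again nonnegative and excessive, and a second pass gives $K(g^2)=g^2$. Finally, the conditional-variance identity $K(g^2)-(Kg)^2=\int_X k(\cdot,y)\bigl(g(\cdot)-g(y)\bigr)^2\,d\mu(y)$ together with $k>0$ yields constancy.

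You instead establish, in a single step, the localized Caccioppoli-type bound
\[
\iint_{X\times X} f_j(x)^2\bigl(g(x)-g(y)\bigr)^2k(x,y)\,d\mu(x)\,d\mu(y)\le 32\,\|g\|_\infty^2\,\cE_K(f_j,f_j),
\]
by expanding, symmetrizing, applying Cauchy--Schwarz, and absorbing. The Step~1 identity, both symmetrized expressions, the constants, and the finiteness of $B_j$ needed for the absorption all check out.

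What your route buys:
\begin{itemize}
\item It avoids the $\varepsilon$-shift and the division by $g$.
\item It does not need harmonicity of $g$ as an intermediate step, although $\int_X f_j^2(g-Kg)\,d\mu\to0$ gives it as a by-product.
\item It dispenses with the Jensen trick and is quantitative.
\end{itemize}
The cost is that boundedness of $g$ enters essentially, through $|g(x)^2-g(y)^2|\le2\|g\|_\infty|g(x)-g(y)|$ and $g\le\|g\|_\infty$, so your constant depends on $\|g\|_\infty$. The paper's Picone inequality has constant one, independent of the size of $g$, and boundedness is invoked only to make $q$ nonnegative. Both arguments end the same way: constancy is read off from a vanishing $k$-weighted energy of $g$ and strict positivity of $k$.
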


\begin{proof}
We first prove that $Kg=g$ almost everywhere. Fix $\varepsilon>0$ and
write $g_\varepsilon=g+\varepsilon$. Let $u\in L^2(X)$ be bounded and
supported on a set of finite measure, with $\cE_K(u,u)<\infty$. Then
\begin{equation}\label{eq:discrete-picone}
\begin{aligned}
0
&\le
\int_X \frac{u^2}{g_\varepsilon}(g-Kg)\,d\mu\\
&=\int_{\supp u} \frac{u^2}{g_\varepsilon}
(g_\varepsilon-Kg_\varepsilon)\,d\mu\\
&=
\frac{1}{2}\iint_{X\times X}
(g_\varepsilon(x)-g_\varepsilon(y))
\left(
\frac{u(x)^2}{g_\varepsilon(x)}
-
\frac{u(y)^2}{g_\varepsilon(y)}
\right)k(x,y)\,d\mu(x)\,d\mu(y)\\
&=
\frac{1}{2}\iint_{X\times X}
\left(
(u(x)-u(y))^2
-\left(
u(x)\sqrt{\frac{g_\varepsilon(y)}{g_\varepsilon(x)}}
-u(y)\sqrt{\frac{g_\varepsilon(x)}{g_\varepsilon(y)}}
\right)^2
\right)k(x,y)\,d\mu(x)\,d\mu(y)\\
&\le
\cE_K(u,u).
\end{aligned}
\end{equation}

Apply \eqref{eq:discrete-picone} with $u=f_j$, where
$\{f_j\}_{j=1}^\infty$ is a recurrent sequence. Fatou's lemma then yields
$$
0\le\int_X \frac{g-Kg}{g_\varepsilon}\,d\mu
\le\varliminf_{j\to\infty}\cE_K(f_j,f_j)=0.
$$

It follows that $Kg=g$ almost everywhere.

It remains to prove constancy. Choose $C<\infty$ such that
$0\le g\le C$, and put
$$
q=C^2-g^2\ge0.
$$

Since $K1=1$, Jensen's inequality gives $K(g^2)\ge(Kg)^2$. Recalling that
$Kg=g$, we obtain
$$
Kq=C^2-K(g^2)\le C^2-(Kg)^2=q.
$$

Applying the first part of the proof to the bounded nonnegative
$K$-excessive function $q$, we conclude that $Kq=q$. Consequently,
$$
K(g^2)=g^2
\qquad\mu\text{-almost everywhere}.
$$

Therefore, for almost every $x$,
\begin{equation}\label{eq:variance-identity}
\begin{aligned}
0&=K(g^2)(x)-(Kg(x))^2\\
&=\int_X k(x,y)(g(x)-g(y))^2\,d\mu(y).
\end{aligned}
\end{equation}

Since $k(x,y)>0$ for all $x,y\in X$, \eqref{eq:variance-identity} implies
that, for $\mu$-almost every $x$, one has $g(y)=g(x)$ for $\mu$-almost every
$y$. Thus $g$ is constant $\mu$-almost everywhere.
\end{proof}

\begin{prop}\label{prop:full-reduction}
If $\cE_K$ is recurrent and $A\in\B$ satisfies $\mu(A)>0$, then
$$
r_A=1\quad\mu\text{-almost everywhere}
$$
and
$$
H_A(x)=1\qquad\text{for every }x\in X.
$$
\end{prop}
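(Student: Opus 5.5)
The plan is to apply Lemma~\ref{lem:excessive-constant} to the reduced function $r_A$, identify the constant using $\mu(A)>0$, and then upgrade the almost-everywhere statement to a pointwise one through the integral defining $H_A=Kr_A$.

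By Lemma~\ref{lem:reduction-properties}, $r_A$ is measurable and $0\le r_A\le 1$. Moreover $Kr_A\le r_A$, so $r_A$ is a bounded nonnegative $K$-excessive function. Since $\cE_K$ is recurrent, Lemma~\ref{lem:excessive-constant} shows that $r_A=c$ $\mu$-almost everywhere for some constant $c\in[0,1]$. On $A$ we have $r_A=1_A+1_{A^c}Kr_A=1$. Because $\mu(A)>0$, the set $A$ cannot be contained in the $\mu$-null exceptional set. Hence $c=1$, and $r_A=1$ $\mu$-almost everywhere. This gives the first assertion.

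For the second assertion, fix an arbitrary $x\in X$. Since $\nu_x$ is absolutely continuous with respect to $\mu$, the $\mu$-null set $\{r_A\ne1\}$ is also $\nu_x$-null. Therefore
$$
H_A(x)=Kr_A(x)=\int_X k(x,y)r_A(y)\,d\mu(y)=\int_X k(x,y)\,d\mu(y)=1,
$$
where the last equality is the conservativity of $k$. This holds for every $x$, not merely almost every $x$, because the integral does not see the values of $r_A$ on a $\mu$-null set.

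There is no serious obstacle. The only points needing care are that the almost-everywhere constancy from Lemma~\ref{lem:excessive-constant} must be matched with the pointwise identity $r_A=1$ on $A$, which is where $\mu(A)>0$ enters, and that the passage to every $x$ must go through the kernel integral rather than through $r_A$ itself. We also need $H_A=Kr_A$, which is recorded before the proposition via monotone convergence.
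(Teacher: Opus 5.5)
Your proposal is correct and follows the paper's proof essentially step for step. You apply Lemma~\ref{lem:excessive-constant} to the bounded excessive function $r_A$, use $r_A=1$ on $A$ together with $\mu(A)>0$ to identify the constant as $1$, and then use $\nu_x\ll\mu$ with conservativity of $k$ to get $H_A(x)=Kr_A(x)=1$ at every point.
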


\begin{proof}
Since $r_A$ is bounded, nonnegative, and $K$-excessive,
Lemma~\ref{lem:excessive-constant} implies that $r_A$ is constant almost
everywhere. Since $r_A=1$ on $A$ and $\mu(A)>0$, this constant equals one.
Hence $r_A=1$ almost everywhere.

Set $Z_A=\{x\in X:r_A(x)\ne1\}$. Then $\mu(Z_A)=0$, and the absolute
continuity $\nu_x\ll\mu$ implies that $\nu_x(Z_A)=0$ for every $x\in X$.

Consequently, for every $x\in X$,
$$
H_A(x)-1=\int_X (r_A-1)\,d\nu_x=0.
$$
\end{proof}

\subsection{The Jamison--Orey theorem}
We use the following version of the Jamison--Orey theorem;
see~\cite[Lemma~4, p.~45, and Corollary~1, p.~46]{Jamison_1967}.

Before stating the theorem, we specify the notation in its recurrence
hypothesis. Let $(S,\B)$ be a countably generated measurable space. Given a
transition probability $P$ on $(S,\B)$, let $\Omega=S^{\N}$ and equip
$\Omega$ with the product $\sigma$-algebra $\F=\B^{\otimes\N}$. For
$\omega=(\omega_1,\omega_2,\ldots)\in\Omega$, define the coordinate maps
$$
X_n:\Omega\longrightarrow S,
\qquad
X_n(\omega)=\omega_n,
\qquad
n\ge1.
$$

The coordinate map $X_n$ records the state after $n$ transitions. For each
$x\in S$, let $P_x$ denote the probability measure on
$(\Omega,\F)$ determined by
$$
\begin{aligned}
P_x\left(\bigcap_{j=1}^nX_j^{-1}(A_j)\right)
&=\int_{A_1\times\cdots\times A_n}
P(x,dy_1)P(y_1,dy_2)\cdots P(y_{n-1},dy_n),
\end{aligned}
$$
for $A_1,\ldots,A_n\in\B$. In particular,
$$
P^n(x,A)=P_x(X_n^{-1}(A)).
$$
The existence and uniqueness of $P_x$ follow from the
Ionescu--Tulcea theorem.

\begin{thm}[Jamison--Orey]\label{thm:jamison-orey}
Assume that there is a nonzero $\sigma$-finite measure $m$ such that
$$
P_x\left(\bigcup_{j=1}^{\infty}X_j^{-1}(A)\right)=1,
$$
for every $x\in S$ and every $A\in\B$ with $m(A)>0$.

Then there exists a $\sigma$-finite stationary measure $Q$, unique up to
multiplication by a positive constant. There also exist a positive integer
$d$ and a finite measurable partition
$$
S=C_0\sqcup\cdots\sqcup C_{d-1}\sqcup F,
\qquad
Q(F)=0,
$$
such that
$$
P(x,C_{i+1})=1,
\qquad
x\in C_i,
$$
where the indices are taken modulo $d$. Moreover, if $x,x'\in C_i$ for
some $i$, then
$$
\lim_{n\to\infty}\sup_{A\in\B}
|P^n(x,A)-P^n(x',A)|=0.
$$
\end{thm}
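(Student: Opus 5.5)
The statement is quoted from~\cite{Jamison_1967}, so in the paper it can be cited directly. If I had to prove it from scratch, I would go through the regeneration (splitting) technique for Harris chains. The argument reduces to the countable-state theorem of Orey~\cite{Orey_1962}, applied to an atom of an auxiliary chain.

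\textbf{Step 1: a small set.} Since $\B$ is countably generated and $m$ is $\sigma$-finite and nonzero, a martingale/differentiation argument applied to the densities of the absolutely continuous parts of $\sum_n 2^{-n}P^n(x,\cdot)$ with respect to $m$ gives the following. There exist a set $C$ with $m(C)>0$, an integer $n_0$, a constant $\beta>0$ and a probability measure $\varphi$ such that
$$P^{n_0}(x,\cdot)\ge\beta\,\varphi(\cdot)\qquad (x\in C).$$
This is the minorization step, and I expect it to be the main technical obstacle. It is the only place where countable generation is genuinely used, through Doob's measurable choice of Radon--Nikodym densities jointly in $(x,y)$. I would first try to imitate Orey's ``$C$-set'' construction.

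\textbf{Step 2: regeneration and the stationary measure.} Split the chain $P^{n_0}$ in Nummelin's manner. On each visit to $C$, with probability $\beta$ the next state is drawn from $\varphi$, independently of the past. By the hypothesis, $C$ is hit infinitely often $P_x$-almost surely for every $x$, so these regeneration times occur infinitely often. The split chain therefore has a recurrent atom~$\alpha$. The occupation measure of an excursion from $\alpha$ defines a $\sigma$-finite stationary measure~$Q$. Uniqueness up to scaling follows from the standard argument that any stationary measure is determined by its restriction to the atom, via the last-exit decomposition.

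\textbf{Step 3: periodicity.} Let $d$ be the gcd of the set of $n$ with $P^n(\alpha,\alpha)>0$, interpreted for the original chain through $\varphi$. Define $C_i$ as the set of points from which the atom is reached only at times congruent to $i$ modulo $d$, up to a shift. The remaining points form $F$, and $Q(F)=0$ because $Q$ charges only states communicating with the atom in the cyclic fashion. Closedness of the classes, $P(x,C_{i+1})=1$ for $x\in C_i$, would follow after discarding a further $Q$-null, absorbing-complement set.

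\textbf{Step 4: merging.} For $x,x'$ in the same $C_i$, run the two split chains independently until both are at the atom at the same time. Consider the differences of the successive regeneration times, which are aperiodic within the class. A product-chain argument (the Orey/Ornstein coupling for recurrent renewal processes, valid even in the null-recurrent case) shows that the two chains meet at the atom almost surely. After that they can be run together. The coupling inequality then bounds
$$\sup_A|P^n(x,A)-P^n(x',A)|$$
by the probability that the coupling has not yet succeeded by time~$n$, which tends to zero.
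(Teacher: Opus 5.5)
Citing the theorem, as your first sentence proposes, is exactly what the paper does: it quotes the result from \cite{Jamison_1967} and gives no proof. Your from-scratch sketch, however, has a genuine gap in Step~4. That step carries the theorem's real content in the null-recurrent case, which is the case the paper needs. Running the two split chains \emph{independently} and appealing to a product-chain argument works only when the product chain is recurrent, essentially the positive-recurrent situation. It fails already for the time-one heat chain on $\R^2$. There the unit ball $C$ is $1$-small and $P^n(x,C)=O(n^{-1})$, so two independent split copies are at the atom at the same time with probability $O(n^{-2})$, which is summable. Hence they are simultaneously at the atom only finitely often, and for suitable starting points they never are, with positive probability. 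So ``the two chains meet at the atom almost surely'' is false for the coupling you actually describe, even though you name Orey and Ornstein.

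What is needed is Ornstein's \emph{dependent} coupling of the regeneration cycles. Draw the cycle lengths as i.i.d.\ pairs $(Y_i,Y_i')$, but replace $Y_i'$ by $Y_i$ whenever $|Y_i-Y_i'|>L$. The second sequence keeps its law, and the difference of the two regeneration-time sequences becomes a symmetric random walk on $\mathbb{Z}$ with bounded increments. This walk is recurrent by Chung--Fuchs and, using aperiodicity, reaches the required offset. Equivalently, one proves
\[
\sum_{j\le n}|(a*u)_j-(b*u)_j|\,P_\alpha(\tau_\alpha>n-j)\to0
\]
and inserts it into the first-entrance/last-exit decomposition. Here $a$ and $b$ are the hitting-time laws of the atom from $x$ and $x'$, and $u_k=P_\alpha(X_k\in\alpha)$. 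This step, not the minorization in Step~1, is the hard part.

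There is also a secondary gap in Step~2. You split the skeleton $P^{n_0}$, but you justify infinitely many regenerations by the hypothesis, which gives visits of the original chain to $C$, not visits at times in $n_0\N$. If the period divides $n_0$, then $C$ lies in one cyclic class up to a null set, and the skeleton started in another class never enters $C$. The standard repair has three parts:
\begin{enumerate}
\item Construct $Q$ by splitting the resolvent $(1-\theta)\sum_{n\ge0}\theta^nP^n$, for which $C$ is $1$-small and whose invariant measures are exactly those of $P$.
\item For merging, work with $P^d$ on each cyclic class and pass to a strongly aperiodic skeleton, whose Harris recurrence must be verified.
\item Extend from the skeleton's times to all $n$, using that $n\mapsto\sup_A|P^n(x,A)-P^n(x',A)|$ is nonincreasing.
\end{enumerate}
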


To apply Theorem~\ref{thm:jamison-orey}, let $k_n$ denote the integral
kernel of $K^n$. Explicitly,
$$
\begin{aligned}
k_1&=k,\\
k_{n+1}(x,y)
&=\int_X k(x,z)k_n(z,y)\,d\mu(z)
=\int_X k_n(z,y)\,d\nu_x(z),
\qquad
\forall x,y\in X.
\end{aligned}
$$

\begin{thm}[Row merging]\label{thm:row-merging}
If $\cE_K$ is recurrent, then there exist a positive integer $d$ and a
finite measurable partition
$$
X=C_0\sqcup\cdots\sqcup C_{d-1}\sqcup F,
\qquad
\mu(F)=0,
$$
such that
$$
\nu_x(C_{i+1})=1,
\qquad
x\in C_i,
$$
where the indices are taken modulo $d$. In fact, $d=1$, and for every
$x,x'\in X$,
$$
\lim_{n\to\infty}
\int_X |k_n(x,y)-k_n(x',y)|\,d\mu(y)=0.
$$
\end{thm}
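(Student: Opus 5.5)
We plan to apply Theorem~\ref{thm:jamison-orey} with $S=X$, $P(x,A)=\nu_x(A)$ and $m=\mu$. The first task is to verify the recurrence hypothesis. For $A\in\B$, the probability $P_x$ of hitting $A$ at some time $n\ge1$ equals $\lim_{m\to\infty}H_{A,m}(x)$. To see this, note that by induction on $m$, $r_{A,m}(y)$ is the probability, starting from $y$, of visiting $A$ at one of the times $0,1,\dots,m$, with time $0$ meaning the starting point itself. Consequently $H_{A,m}(x)=Kr_{A,m-1}(x)$ is the $P_x$-probability that $X_j\in A$ for some $1\le j\le m$. This identification uses the Markov property of the coordinate process under $P_x$ and follows from the recursion $r_{A,m+1}=1_A+1_{A^c}Kr_{A,m}$ by conditioning on the first step. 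Letting $m\to\infty$ and using monotone continuity, the hitting probability equals $H_A(x)$. Proposition~\ref{prop:full-reduction} gives $H_A(x)=1$ for every $x$ whenever $\mu(A)>0$. Since $\mu$ is $\sigma$-finite and nonzero (because $\nu_x$ is a probability measure absolutely continuous with respect to $\mu$), the hypothesis holds.

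Theorem~\ref{thm:jamison-orey} then yields $Q$, $d$ and the partition. We next replace $Q(F)=0$ by $\mu(F)=0$. We will show that $Q$ is a positive multiple of $\mu$. By \eqref{eq:measure-invariance}, $\mu$ is stationary, and it is $\sigma$-finite. The uniqueness clause therefore gives $Q=c\mu$ with $c>0$, so $\mu(F)=0$. Since $\nu_x(C_{i+1})=P(x,C_{i+1})$, the cyclic relation is exactly the one stated.

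To prove $d=1$, suppose $d\ge2$. Then for $x\in C_0$ we have $\nu_x(C_1)=1$, so $\nu_x(C_0)=0$. Because $k(x,\cdot)>0$ everywhere, this forces $\mu(C_0)=0$. The same argument applied to every $i$ gives $\mu(C_i)=0$ for all $i$, and hence $\mu(X)=\mu(F)+\sum_i\mu(C_i)=0$, which contradicts $\nu_x(X)=1$. We also need the $C_i$ to be nonempty; this holds because $\mu(F)=0<\mu(X)$ forces some $C_i$ to have positive measure. Thus $d=1$ and $X=C_0\sqcup F$ with $\mu(F)=0$.

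Merging for $x,x'\in C_0$ is given by Theorem~\ref{thm:jamison-orey}, and the total-variation supremum equals one half of $\int|k_n(x,y)-k_n(x',y)|\,d\mu(y)$. To cover arbitrary $x,x'\in X$, including points of $F$, we take one step first. We have $k_{n+1}(x,\cdot)=\int k_n(z,\cdot)\,d\nu_x(z)$ with $\nu_x(F)=0$. Fix a point $z_0\in C_0$. Then
\[
\int|k_{n+1}(x,y)-k_n(z_0,y)|\,d\mu(y)\le\int_{C_0}\Bigl(\int|k_n(z,y)-k_n(z_0,y)|\,d\mu(y)\Bigr)d\nu_x(z).
\]
The inner integral is at most $2$ and tends to $0$ for each $z\in C_0$, so dominated convergence sends the right-hand side to $0$. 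Applying the same bound to $x'$ and using the triangle inequality gives the merging of $k_{n+1}(x,\cdot)$ and $k_{n+1}(x',\cdot)$.

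The main obstacle is the identification of $Q$ with a multiple of $\mu$. We must check that the paper's form of the Jamison--Orey uniqueness applies to $\mu$, which is stationary in the sense $\int\nu_x(A)\,d\mu(x)=\mu(A)$; this is \eqref{eq:measure-invariance} applied to $f=1_A$. A secondary point is checking the measurability of the inner integral in the last display.
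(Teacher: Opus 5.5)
Your proposal is correct and follows the paper's proof essentially step by step. You identify $P_x(\exists\, 1\le j\le m:\ X_j\in A)$ with $H_{A,m}(x)$ by first-step conditioning and invoke the full-reduction proposition; you match $Q$ with a multiple of $\mu$ via stationarity and uniqueness; you use strict positivity of $k$ to rule out $d\ge2$; and you extend merging off $C_0$ by one step of averaging plus dominated convergence. The differences are cosmetic: you show every $C_i$ is $\mu$-null instead of picking a class of positive measure, and you compare with a fixed $z_0\in C_0$ instead of integrating over $C_0\times C_0$ against $\nu_x\otimes\nu_{x'}$. You should also record the routine identity $P^n(x,A)=\int_A k_n(x,y)\,d\mu(y)$, which your total-variation step uses.
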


\begin{proof}
Set
$$
P(x,A)\coloneq\nu_x(A)=\int_A k(x,y)\,d\mu(y).
$$

For each $x\in X$, $P(x,\cdot)$ is a probability measure; for each $A\in\B$,
the map $x\mapsto P(x,A)$ is
measurable. Hence $P$ is a transition probability on $(X,\B)$.
The defining iterated-integral formula for $P_x$ and
Tonelli's theorem yield
$$
\begin{aligned}
P^{n+1}(x,A)
&=\int_X P^n(z,A)P(x,dz)\\
&=\int_X\int_A k_n(z,y)k(x,z)\,d\mu(y)\,d\mu(z)\\
&=\int_A k_{n+1}(x,y)\,d\mu(y).
\end{aligned}
$$

It follows by induction from
$P(x,A)=\int_A k_1(x,y)\,d\mu(y)$ that
\begin{equation}\label{eq:transition-density}
P^n(x,A)=K^n1_A(x)=\int_A k_n(x,y)\,d\mu(y),
\end{equation}
for every $n\ge1$.

By \eqref{eq:transition-density}, the full variation of the difference
between two rows is
\begin{equation}\label{eq:total-variation-density}
\int_X |k_n(x,y)-k_n(x',y)|\,d\mu(y)
=2\sup_{A\in\B}|P^n(x,A)-P^n(x',A)|.
\end{equation}

We next verify the recurrence hypothesis of
Theorem~\ref{thm:jamison-orey}. Fix $A\in\B$ with $\mu(A)>0$. For
$m\ge1$, set
$$
E_{A,m}\coloneq\bigcup_{j=1}^mX_j^{-1}(A)\subset\Omega.
$$

We claim that
\begin{equation}\label{eq:hitting-reduction}
P_x(E_{A,m})=H_{A,m}(x),
\end{equation}
for every $m\ge1$ and every $x\in X$. For $m=1$,
\eqref{eq:hitting-reduction} follows
directly from
$$
P_x(E_{A,1})=P(x,A)=K1_A(x)=H_{A,1}(x).
$$

Assume that \eqref{eq:hitting-reduction} holds for some $m\ge1$. Using the
iterated-integral definition of $P_x$ and splitting according to whether the
first coordinate lies in $A$ or $A^c$, we obtain
$$
\begin{aligned}
P_x(E_{A,m+1})
&=\int_X \left(1_A(z)+1_{A^c}(z)H_{A,m}(z)\right)P(x,dz)\\
&=K1_A(x)+K(1_{A^c}H_{A,m})(x)\\
&=H_{A,m+1}(x).
\end{aligned}
$$

Thus \eqref{eq:hitting-reduction} holds for every $m\ge1$ by induction. Since
$E_{A,m}$ increases to
$\bigcup_{j=1}^{\infty}X_j^{-1}(A)$, continuity from below and
Proposition~\ref{prop:full-reduction} give
$$
\begin{aligned}
P_x\left(\bigcup_{j=1}^{\infty}X_j^{-1}(A)\right)
&=\lim_{m\to\infty}P_x(E_{A,m})\\
&=\lim_{m\to\infty}H_{A,m}(x)=H_A(x)=1.
\end{aligned}
$$

Thus the recurrence hypothesis of Theorem~\ref{thm:jamison-orey} holds with
$\mu$ as the recurrence measure.

The measure $\mu$ is stationary for $P$. Indeed,
\eqref{eq:measure-invariance}, applied to $1_A$, gives, for every $A\in\B$,
$$
\int_X P(x,A)\,d\mu(x)=\int_X K1_A(x)\,d\mu(x)=\mu(A).
$$

Since the stationary measure in Theorem~\ref{thm:jamison-orey} is unique up
to a positive scalar multiple, we may rescale it so that $Q=\mu$. The
theorem then yields a finite measurable partition
$$
X=C_0\sqcup\cdots\sqcup C_{d-1}\sqcup F,
\qquad
\mu(F)=0,
$$
such that
$$
\nu_x(C_{i+1})=P(x,C_{i+1})=1,
\qquad
x\in C_i.
$$

We claim that $d=1$. Suppose, to the contrary, that $d\ge2$. Since
$\mu(F)=0$ and $\mu$ is nonzero, at least one cyclic class $C_i$
has positive $\mu$-measure. Choose $x\in C_i$. Since $k$ is strictly
positive,
$$
\nu_x(C_i)=\int_{C_i}k(x,y)\,d\mu(y)>0.
$$

However, $C_i\cap C_{i+1}=\varnothing$ and
$\nu_x(C_{i+1})=1$. Since $\nu_x$ is a probability measure, this
forces $\nu_x(C_i)=0$, a contradiction. Therefore $d=1$, and
$$
X=C_0\sqcup F,
\qquad
\mu(F)=0.
$$

Since $\mu(X\setminus C_0)=0$ and $\nu_x\ll\mu$, we have
$\nu_x(C_0)=1$ for every $x\in X$. Consequently, for $x, x'\in X$,
\begin{equation}\label{eq:row-averaging}
\begin{aligned}
&\int_X |k_{n+1}(x,y)-k_{n+1}(x',y)|\,d\mu(y)\\
&=\int_X \left|
\iint_{C_0\times C_0}
\bigl(k_n(z,y)-k_n(w,y)\bigr)
\,d\nu_x(z)\,d\nu_{x'}(w)
\right|\,d\mu(y)\\
&\le\iint_{C_0\times C_0}
\left(\int_X |k_n(z,y)-k_n(w,y)|\,d\mu(y)\right)
\,d\nu_x(z)\,d\nu_{x'}(w).
\end{aligned}
\end{equation}

For every $z,w\in C_0$, Theorem~\ref{thm:jamison-orey} and
\eqref{eq:total-variation-density} show that the inner integral on the
right-hand side of \eqref{eq:row-averaging} tends to zero. It is bounded by
$2$, since both $k_n(z,\cdot)$ and $k_n(w,\cdot)$ are probability densities.
Since $\nu_x|_{C_0}$ and $\nu_{x'}|_{C_0}$ are probability measures,
dominated convergence applied to \eqref{eq:row-averaging} gives the desired
convergence. Relabelling $n+1$ as $n$ proves the limit in
Theorem~\ref{thm:row-merging}.
\end{proof}

\section{Proofs of the main theorems}

\begin{proof}[Proof of Theorem~\ref{thm:memory-loss}]
Let $m=\dim M$. We apply the framework of
Section~\ref{sec:integral-operators} with
$$
(X,\B,\mu)=\bigl(M,\B(M),\hh_M^m\bigr),
\qquad
k(x,z)=p_1^M(x,z),
\qquad
K=P_1^M.
$$
The results of Section~\ref{sec:preliminaries} show that $\B(M)$ is
countably generated, $\hh_M^m$ is $\sigma$-finite, and $p_1^M$ is strictly
positive and symmetric. Proposition~\ref{prop:parabolic-stochastic-complete}
and \eqref{eq:stochastic-completeness} show that $K1=1$.
Thus all the standing assumptions of
Section~\ref{sec:integral-operators} are satisfied.

Let $\{\eta_j\}$ be the cutoff sequence furnished by
Lemma~\ref{lem:parabolic-cutoff}. Applying the heat-energy estimate
\eqref{eq:heat-energy-estimate} with $t=1$, we obtain
\begin{equation}\label{eq:heat-form-recurrence}
\cE_K(\eta_j,\eta_j)
=\langle\eta_j,(I-P_1^M)\eta_j\rangle_{L^2}
\le\int_M |d\eta_j|^2\,d\hh_M^m
\longrightarrow0.
\end{equation}
Here the convergence follows from \eqref{eq:cutoff-energy-decay}.
Each $\eta_j$ is bounded and compactly supported, so it belongs to $L^2(M)$
and its support has finite $\hh_M^m$-measure. The cutoff sequence converges
pointwise to one. Together with \eqref{eq:heat-form-recurrence}, these facts
show that $\cE_K$ is recurrent in the sense of
Definition~\ref{def:recurrent-form}.

Iterating the semigroup identity in \eqref{eq:heat-kernel-identities} shows
that the kernel $k_n$ of $K^n$ is
$$
k_n(x,z)=p_n^M(x,z).
$$
For $r>0$, set
$$
D_r(x,x')\coloneq
\int_M |p_r^M(x,z)-p_r^M(x',z)|\,d\hh_M^m(z).
$$
Theorem~\ref{thm:row-merging} therefore yields, for all $x,x'\in M$,
\begin{equation}\label{eq:integer-memory-loss}
D_n(x,x')=
\int_M |p_n^M(x,z)-p_n^M(x',z)|\,d\hh_M^m(z)
\longrightarrow0.
\end{equation}

It remains to pass from integer times to arbitrary real times. For $t\ge1$,
write
$$
t=n+s,
\qquad
n=\lfloor t\rfloor,
\qquad
0\le s<1,
$$
and set
$$
f_n(z)\coloneq p_n^M(x,z)-p_n^M(x',z).
$$
For $s>0$, \eqref{eq:heat-kernel-identities} gives
\begin{equation}\label{eq:semigroup-interpolation}
p_t^M(x,\cdot)-p_t^M(x',\cdot)=P_s^M f_n.
\end{equation}
When $s=0$, \eqref{eq:semigroup-interpolation} remains valid with $P_0^M=I$.
Hence
Lemma~\ref{lem:heat-l1-contraction} implies
\begin{equation}\label{eq:real-time-contraction}
D_t(x,x')\le\|f_n\|_{L^1(M)}=D_n(x,x').
\end{equation}
Combining \eqref{eq:integer-memory-loss} and
\eqref{eq:real-time-contraction}, and observing that
$n=\lfloor t\rfloor\to\infty$ as $t\to\infty$, proves
\eqref{eq:memory-loss}.
\end{proof}

\begin{proof}[Proof of Theorem~\ref{thm:main}]
Let $m=\dim M$ and $n=\dim N$.
Proposition~\ref{prop:parabolic-stochastic-complete} shows that $M$ is
stochastically complete. Together with the hypothesis on $N$,
Lemma~\ref{lem:product-heat-kernel} then implies that $M\times N$ is
stochastically complete.

Let $u\in\Hinf(M\times N)$. Corollary~\ref{cor:harmonic-fixed-by-heat} gives
the heat-semigroup invariance \eqref{eq:harmonic-heat-invariance} on
$M\times N$. Combining it with \eqref{eq:product-measure} and
\eqref{eq:product-heat-kernel}, we obtain, for every $t>0$,
\begin{equation}\label{eq:product-heat-representation}
u(x,y)=\int_M\int_N
p_t^M(x,z)p_t^N(y,w)u(z,w)
\,d\hh_N^n(w)\,d\hh_M^m(z).
\end{equation}
This integral is absolutely convergent because $u$ is bounded and, by
\eqref{eq:stochastic-completeness}, both heat kernels have total mass one.

Fix $x,x'\in M$ and $y\in N$. Evaluating
\eqref{eq:product-heat-representation} at $(x,y)$ and $(x',y)$, subtracting,
and using stochastic completeness of $N$, we obtain
\begin{equation}\label{eq:product-memory-loss-estimate}
\begin{aligned}
|u(x,y)-u(x',y)|
&\le\|u\|_{L^\infty(M\times N)}
\int_M\int_N
|p_t^M(x,z)-p_t^M(x',z)|p_t^N(y,w)
\,d\hh_N^n(w)\,d\hh_M^m(z)\\
&=\|u\|_{L^\infty(M\times N)}
\int_M |p_t^M(x,z)-p_t^M(x',z)|\,d\hh_M^m(z).
\end{aligned}
\end{equation}
By \eqref{eq:memory-loss}, the integral on the right-hand side of
\eqref{eq:product-memory-loss-estimate} tends to zero as $t\to\infty$.
Hence $u(x,y)=u(x',y)$, and $u$ is independent of the $M$-variable.

Fix $x_0\in M$ and set $v(y)\coloneq u(x_0,y)$. Then
$u=\pi_N^*v$. By \eqref{eq:product-laplacian} and the independence of the
$M$-variable,
$$
0=\Delta_{M\times N}u(x,y)=\Delta_N v(y).
$$
Thus $v\in\Hinf(N)$.

Conversely, if $v\in\Hinf(N)$, then
$$
\Delta_{M\times N}(\pi_N^*v)=\pi_N^*(\Delta_N v)=0,
$$
so $\pi_N^*v\in\Hinf(M\times N)$. The pullback is linear and injective,
and the construction of $v$ proves surjectivity. Finally, since the projection
$\pi_N:M\times N\to N$ is surjective,
$$
\|\pi_N^*v\|_{L^\infty(M\times N)}
=\|v\|_{L^\infty(N)}.
$$
Therefore $\pi_N^*:\Hinf(N)\to\Hinf(M\times N)$ is an isometric linear
isomorphism. If $N$ is also parabolic, then $\Hinf(N)=\R$, and hence
$\Hinf(M\times N)=\R$.
\end{proof}

\section*{Acknowledgments}

The author would like to express his sincere gratitude to Professor Bobo Hua
for suggesting this problem and for his generous guidance, encouragement, and
support throughout the course of this work.

\bibliographystyle{amsalpha-nodash}
\bibliography{reference}
\end{document}